\theoremstyle{definition}
\newtheorem*{theoA}{Theorem A}
\newtheorem*{theoB}{Theorem B}
\newtheorem*{theoC}{Theorem C}
\newtheorem*{theoD}{Theorem D}
\newtheorem*{theoE}{Theorem E}
\newtheorem*{theoF}{Theorem F}
\newtheorem*{theoG}{Theorem G}
\newtheorem*{theoH}{Theorem H}
\newtheorem*{theoI}{Theorem I}
\newtheorem{theo}{Theorem}[section]
\newtheorem{lem}{Lemma}[section]
\newtheorem{defi}{Definition}[section]
\newcommand{\be}{\begin{equation}}
\newcommand{\ee}{\end{equation}}
\newcommand{\beas}{\begin{eqnarray*}}
\newcommand{\eeas}{\end{eqnarray*}}
\newcommand{\bea}{\begin{eqnarray}}
\newcommand{\eea}{\end{eqnarray}}
\numberwithin{equation}{section}
\begin{document}
\title[Value distribution of some D-D monomials]{A Note on the value distribution of some differential- difference monomials generated by a transcendental entire function of hyper-order less than one}
\date{}
\author[S. Roy and et al]{Soumon Roy$^{1}$ and  Sudip Saha$^{2}$ and Ritam Sinha$^{3}$}
\date{}
\address{$^{1}$ \textbf{Soumon Roy}, Nevannlina Lab, Ramakrishna Mission Vivekananda Centenary College, Rahara,
West Bengal 700 118, India.}
\email{rsoumon@gmail.com}
\address{$^{2}$ \textbf{Sudip Saha}, Department of Mathematics, Brainware University, Barasat, kolkata, West Bengal 700 125, India.}
\email{sudipsaha814@gmail.com}
\address{$^{3}$ \textbf{Ritam Sinha}, Nevannlina Lab, Ramakrishna Mission Vivekananda Centenary College, Rahara,
West Bengal 700 118, India.}
\email{sinharitam23@gmail.com}
\maketitle
\let\thefootnote\relax
\footnotetext{2020 AMS Mathematics Subject Classification: 30D30, 30D20, 30D35, 30D45.}
\footnotetext{Key words and phrases: Hyper order, Meromorphic function, Shift, Monomials}
\begin{abstract}{}
Let $\mathfrak{f}$ be a transcendental entire function with hyper-order less than one. The aim of this note is to study the value distribution of the differential-difference monomials $\alpha \mathfrak{f}(z)^{q_0}(\mathfrak{f}(z+c))^{q_1}$, where $c$ is a non-zero complex number and $q_0\geq2,$ $q_1\geq 1$ are non-negative integers, and  $ \alpha(z)$ $(\not\equiv 0,\infty)$ be a small function of $\mathfrak{f}$.
\end{abstract}
\section{Introduction}
Throughout this paper, we assume that the reader is familiar with the value distribution theory of meromorphic functions and is acquainted with the standard notations of Nevanlinna theory (\cite{05}). Furthermore, let $E$ denote any set of positive real numbers with finite Lebesgue measure, where $E$ may vary in each occurrence.\par
A meromorphic function $\mathfrak{f}$ is either analytic or contains at most countable numbers of poles in the complex plane. If  $\mathfrak{f}$ has no poles, then $\mathfrak{f}$ reduces to an entire function. The order of $\mathfrak{f}$ is denoted by $\rho(\mathfrak{f})$ and the hyper-order of $\mathfrak{f}$ is denoted by $\rho_2(\mathfrak{f})$ and defined as
 \begin{equation*}
\rho_{2}(\mathfrak{f}):=\limsup_{r\to\infty}\frac{\log \log T(r,f)}{\log r}.
\end{equation*}
For any non-constant meromorphic function $\mathfrak{f}$, we denote by $S(r,\mathfrak{f})$, any quantity satisfying $$ S(r,\mathfrak{f})=o(T(r,\mathfrak{f}))~~~\text{as}~~ r \rightarrow \infty, r \notin E. $$
By \emph{shift} of $\mathfrak{f}(z)$, we mean $\mathfrak{f}(z+c)$, where $c$ is a non-zero complex constant. Let $\mathfrak{f}$ be a non-constant meromorphic function. A meromorphic function $a(z)$ $(\not \equiv 0, \infty)$ is called a ``small function" with respect to $\mathfrak{f}$ if $T(r,a(z))= S(r,\mathfrak{f})$.
\begin{defi} (\cite{14})
Let $ a\in \mathbb{C} \cup \{\infty\}$. For a positive integer $k$, we define
\begin{enumerate}
\item[(i)] $N_{k)}(r,a; \mathfrak{f})$ as the counting function of $a$- points of $\mathfrak{f}$ with multiplicity $\leq k$,\\
\item[(ii)] $N_{(k}(r,a; \mathfrak{f})$ as the counting function of $a$- points of $\mathfrak{f}$ with multiplicity $\geq k$.
\end{enumerate}
\end{defi}
Similarly, the reduced counting function $\overline{N}_{k)}(r,a; \mathfrak{f})$ and $\overline{N}_{(k}(r,a; \mathfrak{f})$ are defined.
\begin{defi}(\cite{08})
Let $k\in \mathbb{N}$. We define $N_{k}(r,0; \mathfrak{f})$ as the counting function of zeros of $\mathfrak{f}$, where a zero of $\mathfrak{f}$ with multiplicity $q$ is counted $q$ times if $ q\leq k$, and is counted $k$ times if $q>k$.
\end{defi}
In 1959, Hayman (\cite{04}) proved the following theorem:
\begin{theoA} (\cite{04})
If $\mathfrak{f}$ is a transcendental meromorphic function and $n\geq 3 $, then $\mathfrak{f}^n\mathfrak{f}'$ assumes all finite values except possibly zero infinitely often.
\end{theoA}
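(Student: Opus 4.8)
The plan is to argue by contradiction, following the classical circle of ideas built around the lemma on the logarithmic derivative and the second main theorem. Fix $a\in\mathbb{C}\setminus\{0\}$, put $\psi:=\mathfrak{f}^{\,n}\mathfrak{f}'$, and suppose, contrary to the assertion, that $\psi$ takes the value $a$ only finitely often, so that $\overline{N}(r,a;\psi)=O(\log r)=S(r,\mathfrak{f})$ (recall that $\mathfrak{f}$ is transcendental, whence $T(r,\mathfrak{f})/\log r\to\infty$). The driving identity is $(\mathfrak{f}^{\,n+1})'=(n+1)\mathfrak{f}^{\,n}\mathfrak{f}'=(n+1)\psi$, which I would rearrange as $\mathfrak{f}^{-(n+1)}=\psi^{-1}\cdot(\mathfrak{f}'/\mathfrak{f})$. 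Applying the logarithmic derivative lemma to the last factor gives $(n+1)\,m(r,1/\mathfrak{f})\le m(r,1/\psi)+S(r,\mathfrak{f})$, and hence, via the first main theorem,
\[
 T(r,\mathfrak{f})=m(r,1/\mathfrak{f})+N(r,1/\mathfrak{f})+O(1)\le \tfrac{1}{n+1}\,m(r,1/\psi)+N(r,1/\mathfrak{f})+S(r,\mathfrak{f}).
\]

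The second step is to bound $m(r,1/\psi)$ from above. Writing $m(r,1/\psi)=T(r,\psi)-N(r,1/\psi)+O(1)$ and applying the second main theorem to $\psi$ with the three targets $0,a,\infty$,
\[
 T(r,\psi)\le \overline{N}(r,0;\psi)+\overline{N}(r,a;\psi)+\overline{N}(r,\infty;\psi)+S(r,\psi),
\]
I would first note that $T(r,\psi)\le (n+1)T(r,\mathfrak{f})+\overline{N}(r,\infty;\mathfrak{f})+S(r,\mathfrak{f})=O(T(r,\mathfrak{f}))$, so every $S(r,\psi)$ is an $S(r,\mathfrak{f})$. Then comes the multiplicity count: a zero of $\mathfrak{f}$ of order $m$ is a zero of $\psi$ of order $(n+1)m-1$, and a pole of $\mathfrak{f}$ of order $p$ is a pole of $\psi$ of order $(n+1)p+1$. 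Consequently $\overline{N}(r,\infty;\psi)=\overline{N}(r,\infty;\mathfrak{f})$, and comparing the full and reduced zero-counting functions of $\psi$ yields the crucial gain
\[
 \overline{N}(r,0;\psi)-N(r,1/\psi)\le \overline{N}(r,0;\psi)-N(r,0;\psi)\le -(n-1)\,N(r,1/\mathfrak{f}).
\]
Substituting back, $m(r,1/\psi)\le \overline{N}(r,a;\psi)+\overline{N}(r,\infty;\mathfrak{f})-(n-1)\,N(r,1/\mathfrak{f})+S(r,\mathfrak{f})$.

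Feeding this into the inequality from the first step and collecting terms, I would obtain
\[
 T(r,\mathfrak{f})\le \tfrac{1}{n+1}\,\overline{N}(r,a;\psi)+\tfrac{1}{n+1}\,\overline{N}(r,\infty;\mathfrak{f})+\tfrac{2}{n+1}\,N(r,1/\mathfrak{f})+S(r,\mathfrak{f}).
\]
Since $N(r,1/\mathfrak{f})\le T(r,\mathfrak{f})$ and $\overline{N}(r,\infty;\mathfrak{f})\le T(r,\mathfrak{f})$, this collapses to $(n-2)\,T(r,\mathfrak{f})\le \overline{N}(r,a;\psi)+S(r,\mathfrak{f})$. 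For $n\ge 3$ the left-hand side is at least $T(r,\mathfrak{f})$, so the contradiction hypothesis $\overline{N}(r,a;\psi)=S(r,\mathfrak{f})$ would force $T(r,\mathfrak{f})=S(r,\mathfrak{f})$, which is impossible for a transcendental $\mathfrak{f}$. Hence $\mathfrak{f}^{\,n}\mathfrak{f}'-a$ has infinitely many zeros for every $a\neq 0$, which is the claim.

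I expect the delicate point to be the bookkeeping in the middle step: one must keep track of the multiplicities of the zeros and poles of $\psi$ accurately enough that, after absorbing $N(r,1/\mathfrak{f})$ and $\overline{N}(r,\infty;\mathfrak{f})$ into $T(r,\mathfrak{f})$, the coefficient $\tfrac{3}{n+1}$ of $T(r,\mathfrak{f})$ left on the right stays strictly below $1$. That is exactly where the hypothesis $n\ge 3$ is spent; for $n=2$ the final inequality degenerates to $0\le \overline{N}(r,a;\psi)+S(r,\mathfrak{f})$ and the method yields nothing, in agreement with the well-known fact that the borderline cases $n=2$ and $n=1$ are considerably harder and require different arguments.
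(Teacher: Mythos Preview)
The paper does not prove Theorem~A; it is merely quoted (with a citation to Hayman, 1959) as historical background motivating the paper's own results on difference monomials. There is therefore no proof in the paper against which to compare your attempt.

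That said, your argument is correct and is essentially the standard modern derivation. The identity $\mathfrak{f}^{-(n+1)}=\psi^{-1}\cdot(\mathfrak{f}'/\mathfrak{f})$ together with the lemma on the logarithmic derivative gives $(n+1)\,m(r,1/\mathfrak{f})\le m(r,1/\psi)+S(r,\mathfrak{f})$; the second main theorem applied to $\psi$ and the multiplicity count $\overline{N}(r,0;\psi)-N(r,0;\psi)\le -(n-1)\,N(r,0;\mathfrak{f})$ then collapse everything to $(n-2)\,T(r,\mathfrak{f})\le\overline{N}(r,a;\psi)+S(r,\mathfrak{f})$, from which the contradiction follows for $n\ge3$. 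One cosmetic remark: in your middle step the quantities $N(r,1/\psi)$ and $N(r,0;\psi)$ are identical, so what you write as an inequality is in fact an equality.

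It is worth noting that your proof is precisely the differential prototype of the technique the paper itself deploys for its difference monomials: Lemma~\ref{3.9} of the paper is your first two steps combined (with Lemma~\ref{3.1} on shifts standing in for the logarithmic-derivative lemma), and the case-by-case multiplicity analysis in the proofs of Theorems~\ref{th1} and~\ref{th2} plays the same role as your bookkeeping at the zeros of $\mathfrak{f}$ and $\mathfrak{f}'$.
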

Moreover, Hayman (\cite{04}) conjectured that the Theorem A remains valid for the cases $n= 1$, $2$. In 1979, Mues (\cite{09}) confirmed the Hayman's conjecture for $n=2$ as well as Chen and Fang (\cite{02}) ensured the conjecture for $n=1$ in 1995. Later, in 1992, Q. Zhang (\cite{15}) gave the quantitative version of Mues's result as follows:
\begin{theoB} (\cite{15})
For a transcendental meromorphic function $\mathfrak{f}$, the following inequality holds:
$$ T(r,\mathfrak{f}) \leq 6 N\left(r,\frac{1}{\mathfrak{f}^2\mathfrak{f}'-1}\right) + S(r,\mathfrak{f}).$$
\end{theoB}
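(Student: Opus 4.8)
The plan is to work with the differential monomial $\psi:=\mathfrak f^{2}\mathfrak f'=\tfrac13(\mathfrak f^{3})'$; since $\psi-1=\tfrac13\big((\mathfrak f^{3})'-3\big)$, one has $N\!\big(r,\tfrac1{\psi-1}\big)=N\!\big(r,\tfrac1{\mathfrak f^{2}\mathfrak f'-1}\big)$, so the goal becomes $T(r,\mathfrak f)\le 6\,N\!\big(r,\tfrac1{\psi-1}\big)+S(r,\mathfrak f)$. First I would record the comparisons between $T(r,\psi)$ and $T(r,\mathfrak f)$: from $\psi=\mathfrak f^{2}\mathfrak f'$ one gets $T(r,\psi)\le 3T(r,\mathfrak f)+S(r,\mathfrak f)$; conversely the identity $\mathfrak f^{-3}=\psi^{-1}(\mathfrak f'/\mathfrak f)$ and the lemma on the logarithmic derivative give $3m\!\big(r,\tfrac1{\mathfrak f}\big)\le m\!\big(r,\tfrac1{\psi}\big)+S(r,\mathfrak f)$, and combined with $N\!\big(r,\tfrac1{\mathfrak f}\big)\le\tfrac12 N\!\big(r,\tfrac1{\psi}\big)$ (a zero of $\mathfrak f$ of order $k$ is a zero of $\psi$ of order $3k-1\ge 2$) this yields $T(r,\mathfrak f)\le\tfrac12 T(r,\psi)+S(r,\mathfrak f)$. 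In particular $S(r,\psi)=S(r,\mathfrak f)$.

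Next, applying the Second Fundamental Theorem to $\psi$ with the targets $0,1,\infty$, and using $\overline N(r,\psi)=\overline N(r,\mathfrak f)$, $\overline N(r,1/\psi)=\overline N(r,1/\mathfrak f)+\overline N_{0}(r,1/\mathfrak f')$ (where $N_{0}(r,1/\mathfrak f')$ counts the zeros of $\mathfrak f'$ that are not zeros of $\mathfrak f$), and $\overline N(r,1/(\psi-1))\le N(r,1/(\psi-1))$, I would combine with the previous estimate to obtain a Hayman-type inequality
\begin{equation*}
T(r,\mathfrak f)\ \le\ \tfrac12\Big(\overline N(r,\mathfrak f)+\overline N\!\big(r,\tfrac1{\mathfrak f}\big)+\overline N_{0}\!\big(r,\tfrac1{\mathfrak f'}\big)\Big)+\tfrac12\,N\!\Big(r,\tfrac1{\psi-1}\Big)+S(r,\mathfrak f).
\end{equation*}
It then suffices to bound $\overline N(r,\mathfrak f)+\overline N(r,1/\mathfrak f)+\overline N_{0}(r,1/\mathfrak f')$ by $N(r,1/(\psi-1))$ (with a suitable multiplicative constant) plus a multiple of $T(r,\mathfrak f)$ whose coefficient is strictly less than $2$.

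To produce such a bound I would exploit the finer multiplicity structure of $\psi$ together with an auxiliary function. A pole of $\mathfrak f$ of order $p$ is a pole of $\psi$ of order $3p+1\ge 4$, so $4\overline N(r,\mathfrak f)\le N(r,\psi)\le T(r,\psi)+O(1)$; a zero of $\mathfrak f$ of order $k$ is a zero of $\psi'$ of order $3k-2$, and a zero of $\psi-1$ of order $m$ is a zero of $\psi'$ of order $m-1$, so the ramification term $N_{1}(r)\ge N(r,1/\psi')$ of the Second Fundamental Theorem for $\psi$ carries enough negative weight to absorb the full multiplicities of all these zeros. A convenient organising device is the function $\phi:=\dfrac{\psi'}{\psi-1}-3\,\dfrac{\mathfrak f'}{\mathfrak f}$: being a combination of logarithmic derivatives it satisfies $m(r,\phi)=S(r,\mathfrak f)$, and a residue computation shows that its poles are all simple and located precisely at the zeros of $\psi-1$, the zeros of $\mathfrak f$, and the poles of $\mathfrak f$ (with residue $-1$ at each pole of $\mathfrak f$), so that $T(r,\phi)=\overline N(r,1/(\psi-1))+\overline N(r,1/\mathfrak f)+\overline N(r,\mathfrak f)+S(r,\mathfrak f)$. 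Feeding this identity, the multiplicity bounds above, and — if necessary — a case split according to which of $\overline N(r,\mathfrak f)$, $\overline N(r,1/\mathfrak f)$ is dominant, back into the Hayman-type inequality, and optimising the numerical constants, should give $T(r,\mathfrak f)\le 6\,N(r,1/(\psi-1))+S(r,\mathfrak f)$.

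The main obstacle is exactly this last step. The zeros and poles of $\mathfrak f$ are never zeros of $\psi-1$, so the terms $\overline N(r,\mathfrak f)$ and $\overline N(r,1/\mathfrak f)$ do not disappear, and replacing them by $T(r,\mathfrak f)$ is useless since each may be of size comparable to $T(r,\mathfrak f)$; one must extract from the high forced multiplicities on $\psi$ and $\psi'$ — equivalently, from the function $\phi$ and the ramification term — just enough cancellation to make the coefficient of $T(r,\mathfrak f)$ in that bound strictly less than $2$. Obtaining \emph{some} finite constant here already recovers Mues's theorem; bringing it down to $6$ is the delicate quantitative part of the argument.
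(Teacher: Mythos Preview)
The paper does not prove Theorem~B at all: it is quoted in the introduction as a known result of Q.~Zhang (reference~\cite{15}) purely as historical background motivating the paper's own theorems on difference monomials. There is therefore no ``paper's own proof'' against which to compare your proposal.

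Judged on its own merits, your outline follows the classical architecture for results of this type (set $\psi=\mathfrak f^{2}\mathfrak f'$, compare $T(r,\psi)$ with $T(r,\mathfrak f)$, apply the Second Fundamental Theorem to $\psi$, and then try to absorb the parasitic terms $\overline N(r,\mathfrak f)$, $\overline N(r,1/\mathfrak f)$, $\overline N_{0}(r,1/\mathfrak f')$ via an auxiliary logarithmic-derivative combination), and the preliminary estimates you record are correct. But the proposal is, by your own admission, incomplete: you identify the decisive step --- bounding $\overline N(r,\mathfrak f)+\overline N(r,1/\mathfrak f)+\overline N_{0}(r,1/\mathfrak f')$ by a constant times $N(r,1/(\psi-1))$ plus $c\,T(r,\mathfrak f)$ with $c<2$ --- as ``the main obstacle'' and do not carry it out. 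Two concrete gaps: first, the identity you obtain for $T(r,\phi)$ merely \emph{re-expresses} the sum $\overline N(r,\mathfrak f)+\overline N(r,1/\mathfrak f)+\overline N(r,1/(\psi-1))$ as $T(r,\phi)$, which is not by itself a bound in terms of $N(r,1/(\psi-1))$; the standard way such a $\phi$ is exploited is through $N(r,1/\phi)\le T(r,\phi)+O(1)$, after first showing that a specific class of points (e.g.\ multiple $1$-points of $\psi$, or simple zeros of $\mathfrak f$) are \emph{zeros} of $\phi$ --- and you do not indicate which points you intend to capture this way. Second, your $\phi$ is holomorphic at the zeros of $\mathfrak f'$ that are not zeros of $\mathfrak f$, so $T(r,\phi)$ does not see $\overline N_{0}(r,1/\mathfrak f')$ at all; that term still has to be handled separately. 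Until these points are addressed, the argument does not yield any explicit constant, let alone~$6$.
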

  In (\cite{11}), Theorem B was improved by Xu and Yi as
\begin{theoC} (\cite{11})
For a transcendental meromorphic function $\mathfrak{f}$ and $ \phi(z)$ $(\not \equiv 0, \infty)$ be a small function, then the following inequality holds:
$$ T(r,\mathfrak{f}) \leq 6 \overline{N}\left(r,\frac{1}{\phi \mathfrak{f}^2\mathfrak{f}'-1}\right) + S(r,\mathfrak{f}).$$
\end{theoC}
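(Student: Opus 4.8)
\medskip
\noindent\textbf{Proof proposal.}
The plan is to pass to the single auxiliary function
$$\Phi:=\phi\,\mathfrak{f}^{2}\mathfrak{f}'=\tfrac{\phi}{3}\,(\mathfrak{f}^{3})'$$
and to play two estimates for $T(r,\Phi)$ off against each other. Since $T(r,\Phi)\le 3T(r,\mathfrak{f})+\overline{N}(r,\mathfrak{f})+S(r,\mathfrak{f})$, one has $S(r,\Phi)=S(r,\mathfrak{f})$, and at each step one checks that the zeros and poles of $\phi$ contribute only $S(r,\mathfrak{f})$. The divisor of $\Phi$ is explicit up to $S(r,\mathfrak{f})$: a zero of $\mathfrak{f}$ of order $q$ is a zero of $\Phi$ of order $3q-1$, a pole of $\mathfrak{f}$ of order $p$ is a pole of $\Phi$ of order $3p+1$, and the only other zeros of $\Phi$ are the zeros of $\mathfrak{f}'$ lying off the zero set of $\mathfrak{f}$, which we count by $N_{0}(r,1/\mathfrak{f}')$. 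The first task is the lower estimate
$$3T(r,\mathfrak{f})\ \le\ T(r,\Phi)+\overline{N}(r,1/\mathfrak{f})-N_{0}(r,1/\mathfrak{f}')+S(r,\mathfrak{f}),$$
which comes from applying the lemma on the logarithmic derivative to $\mathfrak{f}^{3}$, i.e.\ $m(r,1/\mathfrak{f}^{3})\le m(r,1/(\mathfrak{f}^{3})')+S(r,\mathfrak{f})=m(r,1/\Phi)+S(r,\mathfrak{f})$, combined with the first main theorem and the divisor of $\Phi$ computed above.

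For the matching upper bound I would use the second main theorem for $\Phi$ with targets $0,1,\infty$ in the sharp form
$$T(r,\Phi)\ \le\ \overline{N}(r,\Phi)+\overline{N}(r,1/\Phi)+\overline{N}\!\left(r,\tfrac{1}{\Phi-1}\right)-N_{0}(r,1/\Phi')+S(r,\mathfrak{f}),$$
where $N_{0}(r,1/\Phi')$ counts the zeros of $\Phi'$ that are not zeros of $\Phi(\Phi-1)$, and where $\overline{N}(r,1/(\Phi-1))=\overline{N}(r,1/(\phi\mathfrak{f}^{2}\mathfrak{f}'-1))$ is exactly the quantity occurring in Theorem C. The key is to reabsorb $\overline{N}(r,\Phi)$ and $\overline{N}(r,1/\Phi)$ using the high multiplicities forced by cubing: the poles of $\Phi$ coming from $\mathfrak{f}$ have order $\ge 4$, so $4\,\overline{N}(r,\mathfrak{f})\le N(r,\Phi)+S(r,\mathfrak{f})$, and the zeros of $\Phi$ coming from $\mathfrak{f}$ have order $\ge 2$, so $2\,\overline{N}(r,1/\mathfrak{f})\le N(r,1/\Phi)+S(r,\mathfrak{f})$; moreover $\overline{N}(r,1/\Phi)=\overline{N}(r,1/\mathfrak{f})+\overline{N}_{0}(r,1/\mathfrak{f}')+S(r,\mathfrak{f})$, while the parasitic term $N_{0}(r,1/\mathfrak{f}')$ is controlled through $T(r,\mathfrak{f}'/\mathfrak{f})$ by $N_{0}(r,1/\mathfrak{f}')\le\overline{N}(r,\mathfrak{f})+\overline{N}(r,1/\mathfrak{f})+S(r,\mathfrak{f})$.

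To pin down the leftover counting functions $N(r,\mathfrak{f})$ and $N(r,1/\mathfrak{f})$, I would bring in the two auxiliary functions $\Phi'/(\Phi-1)$ and $\Phi'/(\Phi(\Phi-1))$, each of which has proximity function $S(r,\mathfrak{f})$. The first has simple poles exactly at the zeros of $\Phi-1$ and at the poles of $\Phi$, and vanishes at each zero of $\Phi$ to order one less; applying the first main theorem to it gives $3N(r,1/\mathfrak{f})\le 2\,\overline{N}(r,1/\mathfrak{f})+\overline{N}(r,\mathfrak{f})+\overline{N}(r,1/(\Phi-1))+S(r,\mathfrak{f})$. The second has simple poles at the zeros of $\Phi$ and of $\Phi-1$ but vanishes to order one less than the pole order at each pole of $\Phi$, and the first main theorem for it gives $3N(r,\mathfrak{f})\le\overline{N}(r,1/\mathfrak{f})+\overline{N}_{0}(r,1/\mathfrak{f}')+\overline{N}(r,1/(\Phi-1))+S(r,\mathfrak{f})$. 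Treating the inequalities above as a linear system in $T(r,\mathfrak{f})$, $N(r,\mathfrak{f})$, $N(r,1/\mathfrak{f})$, $m(r,1/\mathfrak{f})$ and $N_{0}(r,1/\mathfrak{f}')$ — together with $T(r,\mathfrak{f})=N(r,\mathfrak{f})+m(r,\mathfrak{f})=N(r,1/\mathfrak{f})+m(r,1/\mathfrak{f})$ and $\overline{N}\le N\le T(r,\mathfrak{f})$ — and eliminating the auxiliary unknowns leaves precisely $T(r,\mathfrak{f})\le 6\,\overline{N}(r,1/(\phi\mathfrak{f}^{2}\mathfrak{f}'-1))+S(r,\mathfrak{f})$; the constant $6$ is the output of this elimination.

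The hard part is exactly this bookkeeping: the terms $\overline{N}(r,\mathfrak{f})$, $\overline{N}(r,1/\mathfrak{f})$ and $N_{0}(r,1/\mathfrak{f}')$ cannot be bounded by the target term $\overline{N}(r,1/(\Phi-1))$ at all — zeros and poles of $\mathfrak{f}$ are never zeros of $\Phi-1$ — so they must be reabsorbed into $T(r,\mathfrak{f})$, and a careless combination collapses to a trivial inequality; success depends on using the exact multiplicity exponents $3q-1$, $3p+1$ produced by $\mathfrak{f}^{3}$ and the two ramification terms $-N_{0}(r,1/\mathfrak{f}')$ and $-N_{0}(r,1/\Phi')$ together with the cancellations they generate. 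A secondary, pervasive point is that $\phi$ is only a small function, so $T(r,\phi)=S(r,\mathfrak{f})$ and $m(r,\phi'/\phi)=S(r,\mathfrak{f})$ must be used at every occurrence; this, and the systematic use of reduced counting functions, is where the argument strengthens Theorem B into Theorem C.
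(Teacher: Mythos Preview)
The paper does not prove Theorem~C. It is quoted in the introduction purely as a background result from Xu and Yi \cite{11}, alongside Theorems~A--I, to motivate the paper's own Theorems~\ref{th1} and~\ref{th2}; no argument for it appears anywhere in the text. So there is nothing in this paper to compare your proposal against.

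On its own merits, your outline follows the classical route for this family of inequalities: work with $\Phi=\phi\mathfrak{f}^{2}\mathfrak{f}'=\tfrac{\phi}{3}(\mathfrak{f}^{3})'$, use the logarithmic-derivative lemma on $\mathfrak{f}^{3}$ to get a lower bound for $T(r,\Phi)$, apply the second main theorem to $\Phi$ at $0,1,\infty$, and control the residual $\overline{N}$-terms via the auxiliary quotients $\Phi'/(\Phi-1)$ and $\Phi'/(\Phi(\Phi-1))$. Your divisor computations ($3q-1$ at zeros, $3p+1$ at poles) and the resulting lower estimate are correct. The one place where the write-up is not yet a proof is the final paragraph: you assert that eliminating the auxiliary quantities from the assembled inequalities yields the constant~$6$, but you do not carry out the elimination, and this is precisely where such arguments succeed or fail. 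If you intend this as a complete proof rather than a sketch, that linear-combination step needs to be made explicit; as you yourself note, a careless combination collapses to a triviality.
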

Later, in 2005, Huang and Gu (\cite{06}) replaced $\mathfrak{f}'$ by $\mathfrak{f}^{(k)}$, where $k(\geq1)$ is an integer and proved the following result:
\begin{theoD} (\cite{06})
For a transcendental meromorphic function $\mathfrak{f}$ and  a positive integer $k$, the following inequality holds:
$$ T(r,\mathfrak{f}) \leq 6 N\left(r,\frac{1}{\mathfrak{f}^2\mathfrak{f}^{(k)}-1}\right) + S(r,\mathfrak{f}).$$
\end{theoD}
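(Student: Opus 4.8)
The plan is to follow the now‑standard scheme behind Theorems~B and~C, combining the Lemma on the Logarithmic Derivative with the Second Main Theorem and keeping careful track of multiplicities; throughout write $\psi:=\mathfrak{f}^{2}\mathfrak{f}^{(k)}$, so that the claim reads $T(r,\mathfrak{f})\le 6\,N\!\left(r,\frac{1}{\psi-1}\right)+S(r,\mathfrak{f})$. One first checks that $\psi$ is non‑constant: if $\psi\equiv 1$ there is nothing to prove, and $\psi\equiv c\notin\{0,1\}$ is impossible because it forces $\mathfrak{f}$ to be zero‑free, and then the lower bound for $T(r,\psi)$ established below gives $T(r,\mathfrak{f})=S(r,\mathfrak{f})$, contradicting transcendence (while $\psi\equiv 0$ would make $\mathfrak{f}$ a polynomial). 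So assume $\psi'\not\equiv 0$.

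The first, routine, part is a proximity estimate for $1/\mathfrak{f}$. The Lemma on the Logarithmic Derivative gives $m(r,\mathfrak{f}^{(j)}/\mathfrak{f})=S(r,\mathfrak{f})$ for $1\le j\le k+1$; dividing $\psi$ and $\psi'=2\mathfrak{f}\mathfrak{f}'\mathfrak{f}^{(k)}+\mathfrak{f}^{2}\mathfrak{f}^{(k+1)}$ by $\mathfrak{f}^{3}$ yields $m(r,\psi/\mathfrak{f}^{3})=S(r,\mathfrak{f})$ and $m(r,\psi'/\mathfrak{f}^{3})=S(r,\mathfrak{f})$. Since $\psi'=(\psi-1)'$, the identity $\mathfrak{f}^{-3}=\bigl(\psi'/\mathfrak{f}^{3}\bigr)\big/(\psi-1)'$, together with $m\!\left(r,\frac{(\psi-1)'}{\psi-1}\right)=S(r,\mathfrak{f})$ and the bound $m\!\left(r,\frac{\psi-1}{(\psi-1)'}\right)\le \overline N\!\left(r,\frac{1}{\psi-1}\right)+\overline N(r,\mathfrak{f})+S(r,\mathfrak{f})$ (which follows from $T(r,\tfrac{\psi-1}{(\psi-1)'})=S(r,\mathfrak{f})+N(r,\tfrac{(\psi-1)'}{\psi-1})+O(1)$), should give
\[
3\,m\!\left(r,\frac{1}{\mathfrak{f}}\right)\ \le\ m\!\left(r,\frac{1}{\psi-1}\right)+\overline N\!\left(r,\frac{1}{\psi-1}\right)+\overline N(r,\mathfrak{f})+S(r,\mathfrak{f}).
\]

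The substantial part is to bound $N(r,1/\mathfrak{f})$, $m(r,1/(\psi-1))$ and $\overline N(r,\mathfrak{f})$ in terms of $N(r,1/(\psi-1))$. The facts to exploit are: (i) a zero of $\mathfrak{f}$ of multiplicity $p$ is a zero of $\psi$ of multiplicity $\ge 2p$, so $N(r,1/\mathfrak{f})\le\frac{1}{2}N(r,1/\psi)$ and such points are never zeros of $\psi-1$; and (ii) a pole of $\mathfrak{f}$ of multiplicity $q$ is a pole of $\psi-1$ of multiplicity $3q+k$. I would apply the Second Main Theorem to $\psi$ with the targets $0,1,\infty$,
\[
T(r,\psi)\ \le\ \overline N(r,\psi)+\overline N\!\left(r,\frac{1}{\psi}\right)+\overline N\!\left(r,\frac{1}{\psi-1}\right)+S(r,\psi),
\]
use $\overline N(r,\psi)=\overline N(r,\mathfrak{f})$, estimate the zeros of $\psi$ not coming from zeros of $\mathfrak{f}$ (the ``free'' zeros of $\mathfrak{f}^{(k)}$) by $N_{k}(r,1/\mathfrak{f})+k\,\overline N(r,\mathfrak{f})+S(r,\mathfrak{f})$ via $m(r,\mathfrak{f}^{(k)}/\mathfrak{f})=S(r,\mathfrak{f})$, and compare with the lower bound $T(r,\psi)\ge 3T(r,\mathfrak{f})-N_{k}(r,1/\mathfrak{f})-S(r,\mathfrak{f})$ coming from $\mathfrak{f}^{3}=\psi\cdot(\mathfrak{f}/\mathfrak{f}^{(k)})$, the First Main Theorem, and $N(r,\psi)=3N(r,\mathfrak{f})+k\,\overline N(r,\mathfrak{f})$. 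Substituting all these into one another together with $T(r,\mathfrak{f})=m(r,1/\mathfrak{f})+N(r,1/\mathfrak{f})+O(1)$ and the displayed proximity bound --- the pole contribution $3N(r,\mathfrak{f})+k\,\overline N(r,\mathfrak{f})$ concealed in $T(r,\psi)$ being what absorbs the $\overline N(r,\mathfrak{f})$ terms arising elsewhere --- should collapse everything to $T(r,\mathfrak{f})\le 6\,N(r,1/(\psi-1))+S(r,\mathfrak{f})$.

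I expect the crux to be exactly this last bookkeeping. Two points are genuinely delicate: the free zeros of $\mathfrak{f}^{(k)}$ enter $\overline N(r,1/\psi)$ but nothing involving $\psi-1$, so they must be absorbed using only $m(r,\mathfrak{f}^{(k)}/\mathfrak{f})=S(r,\mathfrak{f})$ and the truncated function $N_{k}(r,1/\mathfrak{f})$, without losing more than a bounded multiple of $N(r,1/(\psi-1))$; and the poles of $\mathfrak{f}$ must be controlled although they are never zeros of $\psi-1$, which is possible only because a pole of multiplicity $q$ forces a pole of $\psi-1$ of multiplicity $3q+k$, so that the Second Main Theorem for $\psi$ is far from sharp there, and extracting from this the needed bound on $\overline N(r,\mathfrak{f})$ requires care. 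Arranging the constants so that the coefficient of $N(r,1/(\psi-1))$ is exactly $6$, uniformly in $k$, is where the real computational work lies; it is also the step that genuinely uses the exponent $2$ in $\mathfrak{f}^{2}\mathfrak{f}^{(k)}$, since it is the doubled multiplicity of $\psi$ at zeros of $\mathfrak{f}$ that lets those zeros be handled without any reference to $\psi-1$.
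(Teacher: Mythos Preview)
The paper does not contain a proof of Theorem~D. Theorem~D is one of the background results listed in the Introduction (Theorems~A through~I), each of which is simply cited from the literature; Theorem~D in particular is attributed to Huang and Gu~(\cite{06}) and is quoted without proof. The paper's own contributions are Theorems~\ref{th1} and~\ref{th2}, which concern differential--difference monomials $\mathfrak{f}(z)^{q_0}(\mathfrak{f}(z+c))^{q_1}$ and $\mathfrak{f}(z)^{q_0}(\mathfrak{f}'(z+c))^{q_1}$ for entire $\mathfrak{f}$, and those are what the authors actually prove in Section~4. So there is nothing in this paper against which to compare your proposal.

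As a side remark on the proposal itself: the overall architecture you describe (use $m(r,\psi/\mathfrak{f}^3)=S(r,\mathfrak{f})$ from the logarithmic derivative lemma, apply the Second Main Theorem to $\psi$, and do multiplicity bookkeeping at zeros and poles of $\mathfrak{f}$) is indeed the standard route to results of this type, and is the route Huang--Gu follow. Your description is honest about the fact that the final collapse to the constant~$6$ is where the work lies, and you have not actually carried that step out; in particular the claimed inequality $m\bigl(r,\tfrac{\psi-1}{(\psi-1)'}\bigr)\le \overline N(r,\tfrac{1}{\psi-1})+\overline N(r,\mathfrak{f})+S(r,\mathfrak{f})$ and the asserted lower bound $T(r,\psi)\ge 3T(r,\mathfrak{f})-N_k(r,1/\mathfrak{f})-S(r,\mathfrak{f})$ would each need to be checked carefully, and the cancellation you allude to between the $3N(r,\mathfrak{f})+k\overline N(r,\mathfrak{f})$ pole term and the various $\overline N(r,\mathfrak{f})$'s is exactly the step that determines whether one lands on~$6$ or something larger. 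But none of this can be checked against the present paper, because the present paper does not attempt a proof.
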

A natural question arises as to whether the given inequality remains valid when the counting function in Theorem D is replaced by the reduced counting function. Addressing this question, Xu, Yi, and Zhang made significant progress in 2009 by proving the following theorem (\cite{12}):
\begin{theoE} (\cite{12})
Let $\mathfrak{f}$ be a transcendental meromorphic function, and $k$ $(\geq1)$ be a positive integer. If $N_1(r,0;\mathfrak{f})=S(r,\mathfrak{f})$, then
$$ T(r,\mathfrak{f}) \leq 2 \overline{N}\left(r,\frac{1}{\mathfrak{f}^2\mathfrak{f}^{(k)}-1}\right) + S(r,\mathfrak{f}).$$
\end{theoE}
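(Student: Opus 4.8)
The plan is to apply the truncated second fundamental theorem to $g:=\mathfrak{f}^{2}\mathfrak{f}^{(k)}$ at the three targets $0,1,\infty$ and to combine the result with a precise comparison between $T(r,g)$ and $T(r,\mathfrak{f})$. Note first that $\mathfrak{f}^{(k)}\not\equiv 0$ (otherwise $\mathfrak{f}$ would be a polynomial) and that $T(r,g)=O(T(r,\mathfrak{f}))$, so $g$ is transcendental meromorphic with $S(r,g)=S(r,\mathfrak{f})$ and the standard formalism is available.

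\emph{The comparison step.} Writing $\mathfrak{f}^{3}=g\cdot(\mathfrak{f}/\mathfrak{f}^{(k)})$ and using sub-additivity of the proximity function, $3\,m(r,\mathfrak{f})=m(r,\mathfrak{f}^{3})\le m(r,g)+m(r,\mathfrak{f}/\mathfrak{f}^{(k)})$. By the lemma on the logarithmic derivative $m(r,\mathfrak{f}^{(k)}/\mathfrak{f})=S(r,\mathfrak{f})$; since the poles of $\mathfrak{f}^{(k)}/\mathfrak{f}$ lie only over the poles of $\mathfrak{f}$ (each of multiplicity exactly $k$) and over the zeros of $\mathfrak{f}$ (each of multiplicity at most $k$), the hypothesis $\overline{N}(r,0;\mathfrak{f})=N_{1}(r,0;\mathfrak{f})=S(r,\mathfrak{f})$ gives $T(r,\mathfrak{f}/\mathfrak{f}^{(k)})=T(r,\mathfrak{f}^{(k)}/\mathfrak{f})+O(1)\le k\,\overline{N}(r,\infty;\mathfrak{f})+S(r,\mathfrak{f})$. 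On the other hand $N(r,\infty;\mathfrak{f}/\mathfrak{f}^{(k)})\ge N_{0}(r,0;\mathfrak{f}^{(k)})$, where $N_{0}(r,0;\mathfrak{f}^{(k)})$ denotes the counting function (with multiplicity) of the zeros of $\mathfrak{f}^{(k)}$ that are not zeros of $\mathfrak{f}$; hence, by the first main theorem, $m(r,\mathfrak{f}/\mathfrak{f}^{(k)})\le k\,\overline{N}(r,\infty;\mathfrak{f})-N_{0}(r,0;\mathfrak{f}^{(k)})+S(r,\mathfrak{f})$. Finally, a pole of $\mathfrak{f}$ of order $p$ is a pole of $g$ of order $3p+k$, so $N(r,\infty;g)=3N(r,\infty;\mathfrak{f})+k\,\overline{N}(r,\infty;\mathfrak{f})$. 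Putting these together I obtain the key lower bound
\[
T(r,g)\ \ge\ 3\,T(r,\mathfrak{f})+N_{0}(r,0;\mathfrak{f}^{(k)})+S(r,\mathfrak{f}).
\]

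\emph{The second main theorem step.} The truncated second fundamental theorem applied to $g$ gives $T(r,g)\le\overline{N}(r,0;g)+\overline{N}(r,1;g)+\overline{N}(r,\infty;g)+S(r,\mathfrak{f})$. Here $\overline{N}(r,\infty;g)=\overline{N}(r,\infty;\mathfrak{f})\le T(r,\mathfrak{f})$, and since every zero of $g$ is a zero of $\mathfrak{f}$ or of $\mathfrak{f}^{(k)}$, the hypothesis yields $\overline{N}(r,0;g)\le\overline{N}(r,0;\mathfrak{f})+\overline{N}_{0}(r,0;\mathfrak{f}^{(k)})\le N_{0}(r,0;\mathfrak{f}^{(k)})+S(r,\mathfrak{f})$. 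Feeding in the lower bound for $T(r,g)$ from the comparison step, the quantity $N_{0}(r,0;\mathfrak{f}^{(k)})$ cancels and one is left with $3\,T(r,\mathfrak{f})\le\overline{N}(r,1;g)+T(r,\mathfrak{f})+S(r,\mathfrak{f})$, that is,
\[
2\,T(r,\mathfrak{f})\ \le\ \overline{N}\!\left(r,\frac{1}{\mathfrak{f}^{2}\mathfrak{f}^{(k)}-1}\right)+S(r,\mathfrak{f}),
\]
which in particular establishes the claimed inequality.

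\emph{Main obstacle.} The delicate point is the bookkeeping at the zeros of $\mathfrak{f}$: there $\mathfrak{f}^{(k)}$ may vanish to order larger than $\mathfrak{f}$ itself, so $\mathfrak{f}/\mathfrak{f}^{(k)}$ can have a pole of unbounded order and cannot be estimated crudely. One sidesteps this by never touching that pole — only the inequality $N(r,\infty;\mathfrak{f}/\mathfrak{f}^{(k)})\ge N_{0}(r,0;\mathfrak{f}^{(k)})$ is used, together with the first main theorem applied to the reciprocal $\mathfrak{f}^{(k)}/\mathfrak{f}$, whose poles really are of order at most $k$ everywhere. It is precisely here, and in the estimate of $\overline{N}(r,0;g)$, that the hypothesis $N_{1}(r,0;\mathfrak{f})=S(r,\mathfrak{f})$ is needed, namely to discard the zeros of $\mathfrak{f}$; without it the right-hand side would carry an extra term $(k+1)\,\overline{N}(r,0;\mathfrak{f})$.
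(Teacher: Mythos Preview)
The paper does not supply its own proof of Theorem~E: this result is quoted from \cite{12} as background in the introduction, and the paper's original arguments concern only Theorems~\ref{th1} and~\ref{th2}. So there is no ``paper's proof'' to compare against here.

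That said, your argument is correct and self-contained. The comparison step is clean: from $3m(r,\mathfrak{f})\le m(r,g)+m(r,\mathfrak{f}/\mathfrak{f}^{(k)})$, the first main theorem applied to $\mathfrak{f}^{(k)}/\mathfrak{f}$ together with the logarithmic-derivative lemma, the pole count $N(r,\infty;g)=3N(r,\infty;\mathfrak{f})+k\,\overline{N}(r,\infty;\mathfrak{f})$, and the hypothesis $N_{1}(r,0;\mathfrak{f})=\overline{N}(r,0;\mathfrak{f})=S(r,\mathfrak{f})$, you correctly obtain $T(r,g)\ge 3T(r,\mathfrak{f})+N_{0}(r,0;\mathfrak{f}^{(k)})+S(r,\mathfrak{f})$. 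Feeding this into the truncated second main theorem and cancelling $N_{0}$ gives $2T(r,\mathfrak{f})\le\overline{N}(r,1;g)+S(r,\mathfrak{f})$, which is in fact \emph{stronger} by a factor of~$4$ than the inequality stated in Theorem~E.

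One cosmetic point: the sentence ``$T(r,g)=O(T(r,\mathfrak{f}))$, so $g$ is transcendental'' does not by itself justify transcendence of $g$ (an upper bound cannot do that). But this is harmless, since your comparison step already yields the lower bound $T(r,g)\ge 3T(r,\mathfrak{f})+S(r,\mathfrak{f})$, from which transcendence of $g$ (and hence $S(r,g)=S(r,\mathfrak{f})$) follows immediately. You might reorder the exposition to make this explicit.
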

Later, in 2011, removing the restrictions on zeros of $\mathfrak{f}$, Xu, Yi and Zhang (\cite{13}) proved the following theorem:
\begin{theoF} (\cite{13})
Let $\mathfrak{f}$ be a transcendental meromorphic function, and $k$ $(\geq1)$ be a positive integer. Then
$$ T(r,\mathfrak{f}) \leq M \overline{N}\left(r,\frac{1}{\mathfrak{f}^2\mathfrak{f}^{(k)}-1}\right) + S(r,\mathfrak{f}),$$
 where $M $ is $6$ if $k=1$, or $k\geq3$ and $M$ is $10$ if $k=2$.
\end{theoF}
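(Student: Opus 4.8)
Write $F=\mathfrak f^{2}\mathfrak f^{(k)}$; since $F$ is a differential monomial in the transcendental function $\mathfrak f$, we have $S(r,F)=S(r,\mathfrak f)$. The plan is to play the first main theorem, applied to the identity $\mathfrak f^{-2}=\mathfrak f^{(k)}/F$, against the second main theorem (in the form carrying the ramification term $N_{0}(r,1/F')$) applied to $F$ with the targets $0,1,\infty$, exploiting that the zeros and poles of $F$ coming from zeros and poles of $\mathfrak f$ are forced to be highly multiple. Two structural facts carry the bookkeeping: at a zero of $\mathfrak f$ of order $a$ the function $F$ has a zero of order $2a+\operatorname{ord}\bigl(\mathfrak f^{(k)}\bigr)\ge 2a$, whence the exact identity $N(r,1/F)=2N(r,1/\mathfrak f)+N\bigl(r,1/\mathfrak f^{(k)}\bigr)$; and at a pole of $\mathfrak f$ of order $p$ the function $F$ has a pole of order $3p+k$, whence $N(r,F)-\overline N(r,F)\ge (k+2)\,\overline N(r,\mathfrak f)$.

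First I would apply the logarithmic derivative lemma to $\mathfrak f^{-2}=\mathfrak f^{(k)}/F$ to obtain $2m(r,1/\mathfrak f)\le m(r,\mathfrak f)+m(r,1/F)+S(r,\mathfrak f)$; rewriting each proximity function by the first main theorem and substituting the identity for $N(r,1/F)$ collapses this to $T(r,\mathfrak f)\le T(r,F)-N(r,\mathfrak f)-N\bigl(r,1/\mathfrak f^{(k)}\bigr)+S(r,\mathfrak f)$. Feeding in $T(r,F)\le\overline N(r,F)+\overline N(r,1/F)+\overline N(r,1/(F-1))-N_{0}(r,1/F')+S(r,\mathfrak f)$, together with the decomposition $\overline N(r,1/F)=\overline N(r,1/\mathfrak f)+\overline N_{0}\bigl(r,1/\mathfrak f^{(k)}\bigr)$ (here $\overline N_{0}$ counts zeros of $\mathfrak f^{(k)}$ that are not zeros of $\mathfrak f$), and cancelling the quantities already recorded in $N(r,\mathfrak f)$ and $N(r,1/\mathfrak f^{(k)})$, one reaches the key intermediate estimate
\[
T(r,\mathfrak f)\le \overline N\!\left(r,\tfrac1{\mathfrak f}\right)+\overline N\!\left(r,\tfrac1{F-1}\right)-\bigl(N(r,\mathfrak f)-\overline N(r,\mathfrak f)\bigr)-N_{0}(r,1/F')+S(r,\mathfrak f).
\]
So everything reduces to absorbing $\overline N(r,1/\mathfrak f)$ — and, inside the auxiliary arguments, $\overline N(r,\mathfrak f)$ as well — into a constant multiple of $\overline N(r,1/(F-1))$. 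The pole term is the easy half: comparing $N$ with $\overline N$ at $\infty$ for $F$ and using the standard estimate $\overline N_{0}(r,1/\mathfrak f^{(k)})\le\overline N(r,1/\mathfrak f)+k\,\overline N(r,\mathfrak f)+S(r,\mathfrak f)$, the $k\,\overline N(r,\mathfrak f)$ terms cancel and one is left with $2\,\overline N(r,\mathfrak f)\le 2\,\overline N(r,1/\mathfrak f)+\overline N(r,1/(F-1))+S(r,\mathfrak f)$.

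The genuine obstacle is the companion bound for the zeros of $\mathfrak f$, namely $\overline N(r,1/\mathfrak f)\le c\,\overline N(r,1/(F-1))+S(r,\mathfrak f)$ for a suitable constant $c$. A zero of $\mathfrak f$ is a zero of $F$ of multiplicity $\ge 2$, hence a zero of $F'$ lying \emph{on} a zero of $F$, so it contributes nothing to the term $N_{0}(r,1/F')$ supplied by the second main theorem; and every purely algebraic rearrangement of $F$, $F-1$ and $\mathfrak f$ is circular, since all three are built from $\mathfrak f$, so the value $1$ has to be introduced through a separate channel. The standard device is an auxiliary function — for example a fixed linear combination of $F''/F'$, $F'/(F-1)$ and $\mathfrak f'/\mathfrak f$ — whose proximity function is $S(r,\mathfrak f)$ and whose polar divisor simultaneously records the multiple zeros of $F-1$ (so that $\overline N$ and $N$ of $F-1$ may be compared) and the zeros of $\mathfrak f$; estimating the characteristic of this function then charges $\overline N(r,1/\mathfrak f)$ to $\overline N(r,1/(F-1))$, and one must keep careful track of how the zeros of $\mathfrak f^{(k)}$ off the zeros of $\mathfrak f$ propagate. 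Solving the resulting linear system together with the two displayed inequalities produces $T(r,\mathfrak f)\le 6\,\overline N(r,1/(F-1))+S(r,\mathfrak f)$ when $k=1$ or $k\ge 3$. For $k=2$ the control on the zeros of $\mathfrak f''$ away from the zeros of $\mathfrak f$ is weaker, a cross term in this step cannot be eliminated, and one is forced to the larger constant $M=10$; in the degenerate sub-case $\overline N(r,1/\mathfrak f)=S(r,\mathfrak f)$ the intermediate estimate already gives the stronger bound of Theorem E.
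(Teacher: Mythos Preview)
The paper does not contain a proof of Theorem~F at all: it is quoted in the introduction as a result of Xu, Yi and Zhang (reference~\cite{13} in the paper) and is used purely as historical motivation for the paper's own Theorems~\ref{th1} and~\ref{th2}. There is therefore nothing in the present paper against which to compare your proposal.

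That said, a brief assessment of the plan itself: the overall architecture --- bounding $T(r,\mathfrak f)$ via $T(r,F)$ through the identity $\mathfrak f^{-2}=\mathfrak f^{(k)}/F$, applying the second main theorem with the ramification term, and then absorbing $\overline N(r,1/\mathfrak f)$ into $\overline N(r,1/(F-1))$ by means of an auxiliary logarithmic-derivative combination --- is indeed the standard strategy in this circle of results and is essentially what \cite{13} does. However, your write-up leaves the crucial step genuinely unspecified: you say ``for example a fixed linear combination of $F''/F'$, $F'/(F-1)$ and $\mathfrak f'/\mathfrak f$'' and then ``solving the resulting linear system,'' but the entire content of the theorem --- in particular the distinction between $M=6$ and $M=10$ and the exact role of $k=2$ --- lives precisely in the choice of coefficients in that auxiliary function and in the pole/zero bookkeeping that follows. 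Without actually writing down the auxiliary function, verifying which poles cancel, and carrying out the linear algebra, the proposal is a correct table of contents rather than a proof. If you want this to stand on its own you must exhibit the auxiliary function explicitly (in \cite{13} it is built from $F'/F$, $F'/(F-1)$ and $\mathfrak f'/\mathfrak f$ with specific integer weights depending on $k$), compute its pole divisor, and show how the constants $6$ and $10$ emerge.
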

Later, in 2018, Karmakar and Sahoo (\cite{07}) further improved the Theorem F and obtained the following result:
\begin{theoG} (\cite{07})
Let $\mathfrak{f}$ be a transcendental meromorphic function, and $k$ $(\geq1)$, $n$ $(\geq2)$ be a positive integer. Then
$$ T(r,\mathfrak{f}) \leq \frac{6}{2n-3} \overline{N}\left(r,\frac{1}{\mathfrak{f}^n\mathfrak{f}^{(k)}-1}\right) + S(r,\mathfrak{f}).$$
\end{theoG}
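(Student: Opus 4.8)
\medskip
\noindent\textbf{A strategy to prove Theorem G.}
The plan is to run a Hayman--Milloux type argument centred on the auxiliary function
$$ F \;:=\; \mathfrak f^{\,n}\,\mathfrak f^{(k)}. $$
First I would dispose of the degenerate case where $F$ is constant: since $\mathfrak f$ is transcendental and $n\geq 2$, $F\equiv c$ forces $\mathfrak f$ to have no zeros, no poles, and $(e^{g})^{(k)}$ of polynomial type, which is impossible; so assume $F$ is transcendental. Next I would record the local arithmetic that drives the whole argument: a pole of $\mathfrak f$ of order $p$ is a pole of $F$ of order $(n+1)p+k$, so that
$$ \overline N(r,F)=\overline N(r,\mathfrak f),\qquad N(r,F)=(n+1)N(r,\mathfrak f)+k\,\overline N(r,\mathfrak f); $$
a zero of $\mathfrak f$ of order $m$ is a zero of $F$ of order at least $nm$ and a zero of $F'=\mathfrak f^{\,n-1}\big(n\mathfrak f'\mathfrak f^{(k)}+\mathfrak f\mathfrak f^{(k+1)}\big)$ of order at least $(n-1)m$; and a zero of $\mathfrak f^{(k)}$ off the zeros of $\mathfrak f$ is a zero of $F$ of the same order. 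In particular, since $n\geq 2$, zeros of $\mathfrak f$ are \emph{thick} in both $F$ and $F'$.

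\medskip
The two quantitative inputs come next. From the lemma on the logarithmic derivative applied to $\mathfrak f^{(k)}/\mathfrak f$ one gets $m(r,F)\leq (n+1)\,m(r,\mathfrak f)+S(r,\mathfrak f)$, hence $T(r,F)\leq (n+1)\,T(r,\mathfrak f)+k\,\overline N(r,\mathfrak f)+S(r,\mathfrak f)$; and writing $\mathfrak f^{\,n+1}=F\cdot\mathfrak f/\mathfrak f^{(k)}$ together with
$$ m\Big(r,\tfrac{\mathfrak f}{\mathfrak f^{(k)}}\Big)\;\leq\; N\Big(r,\tfrac{\mathfrak f^{(k)}}{\mathfrak f}\Big)+S(r,\mathfrak f)\;\leq\; N_{k}(r,0;\mathfrak f)+k\,\overline N(r,\mathfrak f)+S(r,\mathfrak f) $$
(the poles of $\mathfrak f^{(k)}/\mathfrak f$ lying at the zeros of $\mathfrak f$, with order $\min(m,k)$, and at the poles of $\mathfrak f$, with order $k$) gives the matching lower bound
$$ T(r,F)\;\geq\;(n+1)\,T(r,\mathfrak f)-N_{k}(r,0;\mathfrak f)+S(r,\mathfrak f). $$
On the other hand the second main theorem for $F$ with the targets $0,1,\infty$, retained in the form that carries the ramification term so that one ends with reduced counting functions, reads
$$ T(r,F)\;\leq\;\overline N(r,F)+\overline N\Big(r,\tfrac1{F}\Big)+\overline N\Big(r,\tfrac1{F-1}\Big)-N_{0}\Big(r,\tfrac1{F'}\Big)+S(r,\mathfrak f), $$
with $N_{0}(r,1/F')$ the zeros of $F'$ lying off $F(F-1)$. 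Using $\overline N(r,1/F)=\overline N(r,1/\mathfrak f)+\overline N_{0}(r,1/\mathfrak f^{(k)})$ and feeding in the lower bound, one isolates the master inequality
$$ (n+1)\,T(r,\mathfrak f)\;\leq\; N_{k}(r,0;\mathfrak f)+\overline N(r,\mathfrak f)+\overline N\Big(r,\tfrac1{\mathfrak f}\Big)+\overline N_{0}\Big(r,\tfrac1{\mathfrak f^{(k)}}\Big)+\overline N\Big(r,\tfrac1{F-1}\Big)+S(r,\mathfrak f). $$

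\medskip
It remains to absorb the parasitic terms $N_{k}(r,0;\mathfrak f)$, $\overline N(r,1/\mathfrak f)$ and $\overline N_{0}(r,1/\mathfrak f^{(k)})$. Here I would bring in a Clunie/Milloux-type device: build an auxiliary function from $F'/(F(F-1))$, corrected by logarithmic derivatives of $\mathfrak f$ and $\mathfrak f^{(k)}$, so that it is regular at the zeros of $\mathfrak f$ and of $\mathfrak f^{(k)}$, has proximity function $S(r,\mathfrak f)$, and whose only poles are simple ones at the zeros of $F-1$. Estimating its characteristic and exploiting the thickness of the zeros of $\mathfrak f$ in $F$ and $F'$ (the source of the factor depending on $n$, and only on $n$, which is why $k$ drops out of the final constant) bounds the parasitic terms against a controlled multiple of $\overline N(r,1/(F-1))$ plus $S(r,\mathfrak f)$; substituting this back into the master inequality and collecting coefficients yields
$$ (2n-3)\,T(r,\mathfrak f)\;\leq\;6\,\overline N\Big(r,\tfrac1{F-1}\Big)+S(r,\mathfrak f), $$
which is the assertion. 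The numerator $6$ is the total cost of the three Nevanlinna targets $0,1,\infty$ once the full counting functions have been traded for reduced ones through the ramification term.

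\medskip
The step I expect to be the genuine obstacle is precisely the last one: simultaneously controlling $N_{k}(r,0;\mathfrak f)$ and $\overline N_{0}(r,1/\mathfrak f^{(k)})$ for a \emph{general} order $k$ of differentiation — for $k=1$ the quantity $\mathfrak f^{(k)}/\mathfrak f=\mathfrak f'/\mathfrak f$ and the bracket $n\mathfrak f'\mathfrak f^{(k)}+\mathfrak f\mathfrak f^{(k+1)}$ simplify considerably, but for $k\geq 2$ the zeros of $\mathfrak f^{(k)}$ carry no a priori bound and must be squeezed out through the auxiliary-function construction — and doing this \emph{sharply enough} to land on the exact constant $6/(2n-3)$ rather than a cruder one. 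This is also where the mildly worse behaviour of the second derivative, responsible for the larger constant $10$ when $k=2$ in Theorem F, has to be re-examined; one verifies that for $n\geq 2$ the factor $\mathfrak f^{\,n}$ is thick enough to absorb the extra loss, so that the uniform constant $6/(2n-3)$ is valid for every $k\geq 1$.
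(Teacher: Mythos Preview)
The paper does not contain a proof of Theorem~G. It is quoted from \cite{07} (Karmakar--Sahoo, \emph{Results Math.}\ 73 (2018)) purely as background and motivation for the paper's own results, Theorems~\ref{th1} and~\ref{th2}, which concern the difference monomials $\alpha\mathfrak f(z)^{q_0}\mathfrak f(z+c)^{q_1}$ and $\alpha\mathfrak f(z)^{q_0}\mathfrak f'(z+c)^{q_1}$. So there is no in-paper argument to compare your sketch against.

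That said, two remarks on your outline itself. The skeleton --- set $F=\mathfrak f^{n}\mathfrak f^{(k)}$, obtain the lower bound $T(r,F)\geq (n+1)T(r,\mathfrak f)-N_{k}(r,0;\mathfrak f)+S(r,\mathfrak f)$ via $\mathfrak f^{n+1}=F\cdot \mathfrak f/\mathfrak f^{(k)}$, apply the second main theorem to $F$ at $0,1,\infty$ with the ramification term retained, and then absorb the parasitic counting functions --- is exactly the template used in \cite{07} and its predecessors \cite{12,13}. You have correctly identified that the entire difficulty sits in the absorption step; what you describe only as ``a Clunie/Milloux-type device'' is, in \cite{07}, a concrete auxiliary function built from $F'/(F-1)$ together with logarithmic-derivative corrections, and the constant $6/(2n-3)$ falls out of a careful bookkeeping of multiplicities at zeros of $\mathfrak f$ versus zeros of $\mathfrak f^{(k)}$ off $\mathfrak f$. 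As written, your sketch is a plausible roadmap but not yet a proof: the paragraph beginning ``Here I would bring in\ldots'' asserts the existence and properties of the auxiliary function without constructing it, and the passage from the master inequality to the final coefficients $(2n-3)$ and $6$ is stated rather than derived. If you intend to reconstruct the argument, that paragraph is where all the actual work must go.
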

Furthermore, in 2020,  Chakraborty et al. (\cite{01}) generalized Theorem G for transcendental meromorphic functions without simple poles and established the following result:
\begin{theoH} (\cite{01})
Let $\mathfrak{f}$ be a transcendental meromorphic function such that it has no simple pole. Also let $q_0$ $(\geq2)$, $q_k$ $(\geq1)$ are $(k \in \mathbb{N})$ integer. Then
$$ T(r,\mathfrak{f}) \leq \frac{6}{2q_{0}-3} \overline{N}\left(r,\frac{1}{\mathfrak{f}^{q_0}{(\mathfrak{f}^{(k)})}^{q_k}-1}\right) + S(r,\mathfrak{f}).$$
\end{theoH}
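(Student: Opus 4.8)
The plan is to transfer the value distribution of $\mathfrak f$ onto that of the single function $F:=\mathfrak f^{q_0}\big(\mathfrak f^{(k)}\big)^{q_k}$, whose $1$-points are exactly the zeros of $\mathfrak f^{q_0}(\mathfrak f^{(k)})^{q_k}-1$. Since $\mathfrak f$ is transcendental meromorphic, the lemma on the logarithmic derivative and the second main theorem apply with no extra hypothesis; the argument runs parallel to Karmakar--Sahoo's proof of Theorem~G (the case $q_k=1$), the $q_k$-th power and the absence of simple poles entering only in the book-keeping. First I would record, using $m(r,\mathfrak f^{(j)}/\mathfrak f)=S(r,\mathfrak f)$, that $T(r,F)$ and $T(r,\mathfrak f)$ have the same order of growth, so $S(r,F)=S(r,\mathfrak f)$; and I would analyse the local structure of $F$. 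A pole of $\mathfrak f$ of order $p$ (necessarily $p\ge2$) is a pole of $F$ of order $(q_0+q_k)p+q_kk$, so $\overline N(r,F)=\overline N(r,\mathfrak f)$ and $N(r,F)=(q_0+q_k)N(r,\mathfrak f)+q_kk\,\overline N(r,\mathfrak f)$; a zero of $\mathfrak f$ of order $p$ is a zero of $F$ of order $\ge q_0p\ge q_0$; and the remaining zeros of $F$ lie at zeros of $\mathfrak f^{(k)}$ off the zero set of $\mathfrak f$ and have order $\ge q_k$. Hence $\overline N(r,1/F)=\overline N(r,1/\mathfrak f)+\overline N_0(r,1/\mathfrak f^{(k)})$ and
$$
N(r,1/F)-\overline N(r,1/F)\ \ge\ q_0\,N(r,1/\mathfrak f)-\overline N(r,1/\mathfrak f)+(q_k-1)\,N_0(r,1/\mathfrak f^{(k)}).
$$

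The heart of the proof is the second main theorem applied to $F$ with respect to the three values $0,1,\infty$ (retaining the ramification term $N_0(r,1/F')$). Writing the left-hand side as $T(r,F)=m(r,1/F)+N(r,1/F)+O(1)$, cancelling $\overline N(r,1/F)$ and substituting the surplus inequality above turns the bound into one in which the counting functions of $\mathfrak f$ appear with inflated coefficients: the order-$\ge q_0$ vanishing of $F$ at zeros of $\mathfrak f$ upgrades a naive ``$1$'' to ``$q_0$'', and this gain is what, after a second application of the second main theorem, will produce the coefficient $2q_0-3$. On the right of this bound there remain, besides $\overline N(r,1/(F-1))$, only ``error'' terms: $\overline N(r,\mathfrak f)$, $\overline N_0(r,1/\mathfrak f^{(k)})$, and residual proximity quantities controlled by the logarithmic derivative lemma.

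The third stage is to absorb those error terms. For the ones attached to $\mathfrak f^{(k)}$ I would use the standard estimates $N(r,1/\mathfrak f^{(k)})\le N(r,1/\mathfrak f)+k\,\overline N(r,\mathfrak f)+S(r,\mathfrak f)$ and $m(r,1/\mathfrak f)\le m(r,1/\mathfrak f^{(k)})+S(r,\mathfrak f)$; for $\overline N(r,\mathfrak f)$ and $\overline N(r,1/\mathfrak f)$ I would run a companion estimate --- the second main theorem applied to $\mathfrak f$ itself, in the style of Zhang's proof of Theorem~B (which already contains the numerical factor $6$ when $q_0=2$) --- to bound them by $\overline N(r,1/(F-1))$ and $S(r,\mathfrak f)$. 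Here the hypothesis that $\mathfrak f$ has no simple pole enters decisively, through $\overline N(r,\mathfrak f)\le\tfrac12N(r,\mathfrak f)\le\tfrac12T(r,\mathfrak f)$: this is what lets the pole contribution carried by the $q_k$-th power of $\mathfrak f^{(k)}$ be reincorporated and the $\overline N(r,\mathfrak f)$-terms be moved to the left. Collecting everything with $m(r,1/F)\ge0$, $N(r,1/\mathfrak f)\ge\overline N(r,1/\mathfrak f)$ and $T(r,\mathfrak f)=m(r,1/\mathfrak f)+N(r,1/\mathfrak f)+O(1)$ should give $(2q_0-3)\,T(r,\mathfrak f)\le 6\,\overline N(r,1/(F-1))+S(r,\mathfrak f)$, which is the assertion.

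The step I expect to be the real obstacle is the last one --- pinning down the \emph{exact} constant $\tfrac{6}{2q_0-3}$. It requires keeping precise account of which zeros of $F$ come from zeros of $\mathfrak f$ (and whether their order is $\le k$ or $>k$), which come from extraneous zeros of $\mathfrak f^{(k)}$, how the ramification of $F$ interacts with the pole surplus controlled by the no-simple-pole hypothesis, and --- throughout --- resisting any lossy use of $\overline N\le N$, which would immediately weaken the constant; one must also verify that all discarded quantities are genuinely $S(r,\mathfrak f)$, which rests on $T(r,F)$ and $T(r,\mathfrak f)$ having the same order of growth.
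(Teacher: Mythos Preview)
The paper does not contain a proof of Theorem~H. In this note Theorem~H is quoted, with attribution to \cite{01}, purely as background motivating the authors' own results; the only statements actually proved here are Theorem~\ref{th1} and Theorem~\ref{th2}, which concern the \emph{difference} monomials $\alpha\mathfrak f(z)^{q_0}\mathfrak f(z+c)^{q_1}$ and $\alpha\mathfrak f(z)^{q_0}\mathfrak f'(z+c)^{q_1}$ for entire $\mathfrak f$ of hyper-order $<1$, not the differential monomial $\mathfrak f^{q_0}(\mathfrak f^{(k)})^{q_k}$ of Theorem~H. Consequently there is nothing in this paper to compare your proposal against.

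For what it is worth, your outline is in the right spirit for results of this type: pass from $\mathfrak f$ to $F=\mathfrak f^{q_0}(\mathfrak f^{(k)})^{q_k}$ via the logarithmic-derivative lemma, apply the second main theorem to $F$ at $0,1,\infty$, and exploit the multiplicity gain at zeros of $\mathfrak f$ together with $\overline N(r,\mathfrak f)\le\tfrac12 N(r,\mathfrak f)$ from the no-simple-pole hypothesis. That is indeed how the argument in \cite{01} (and its predecessors \cite{07}, \cite{13}) is organised. But your third stage is still only a plan: you have not shown how the companion second-main-theorem estimate actually closes with the constant $6/(2q_0-3)$, and you yourself flag this as the real obstacle. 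To verify the constant you would have to consult \cite{01} directly, since the present paper neither reproduces nor adapts that computation.
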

Later, Bhoosnurmath, Chakraborty, and Srivastava (\cite{0}), as well as Chakraborty and L\"{u} (\cite{1}), and Saha and Chakraborty (\cite{10}), obtained many significant results in this direction. Among these, the work of Saha and Chakraborty (\cite{10}) notably improved and extended the results of Karmakar and Sahoo (\cite{07}) for a specific class of transcendental meromorphic functions. Furthermore, the following theorem represents a significant improvement over the recent results of B. Chakraborty et al. (\cite{01}).
\begin{theoI} (\cite{10})
Let $\mathfrak{f}$ be a transcendental meromorphic function such that $N_{1)}(r,\infty;\mathfrak{f})=S(r,\mathfrak{f})$ and $\alpha=\alpha(z)$ $(\not \equiv 0,\infty)$ be a small function of $\mathfrak{f}$. Also let $q_0$ $(\geq2)$, $q_i$ $(\geq0)$ $(i=1,2,\ldots,k-1)$, $q_k$ $(\geq1)$ are non- negative integers. Then for any small function  $a=a(z)(\not \equiv 0,\infty)$ of $\mathfrak{f}$,
$$ T(r,\mathfrak{f}) \leq \frac{2}{2q_{0}-3} \overline{N}\left(r,\frac{1}{\alpha\mathfrak{f}^{q_0}{(\mathfrak{f}')}^{q_1}\ldots {(\mathfrak{f}^{(k)})}^{q_k} -a}\right) + S(r,\mathfrak{f}).$$
\end{theoI}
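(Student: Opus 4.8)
The plan is to transfer the statement to a single auxiliary function and then apply the second main theorem, using the hypothesis on simple poles to control the pole term. Write
$$G:=\alpha\,\mathfrak f^{q_0}(\mathfrak f')^{q_1}\cdots(\mathfrak f^{(k)})^{q_k},\qquad Q:=q_0+q_1+\cdots+q_k,\qquad \lambda:=\sum_{j=1}^{k} j\,q_j .$$
First I would record the growth comparison between $G$ and $\mathfrak f$. Writing $G=\alpha\,\mathfrak f^{\,Q}\prod_{j=1}^{k}\bigl(\mathfrak f^{(j)}/\mathfrak f\bigr)^{q_j}$ and applying the lemma on the logarithmic derivative gives $m(r,G)\le Q\,m(r,\mathfrak f)+S(r,\mathfrak f)$, while counting poles (a pole of $\mathfrak f$ of multiplicity $p$ is a pole of $G$ of multiplicity $Qp+\lambda$, away from the zeros and poles of $\alpha$) gives $N(r,G)=Q\,N(r,\mathfrak f)+\lambda\,\overline N(r,\mathfrak f)+S(r,\mathfrak f)$. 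Hence $T(r,G)=O(T(r,\mathfrak f))$, so $S(r,G)=S(r,\mathfrak f)$ and likewise $T(r,G-a)=O(T(r,\mathfrak f))$, $S(r,G-a)=S(r,\mathfrak f)$. Moreover every pole of $G$ lies over a pole of $\mathfrak f$ (again away from the zeros and poles of $\alpha$), so $\overline N(r,\infty;G)\le\overline N(r,\infty;\mathfrak f)+S(r,\mathfrak f)$, and the assumption $N_{1)}(r,\infty;\mathfrak f)=S(r,\mathfrak f)$ will be used throughout in the form $\overline N(r,\infty;\mathfrak f)\le\tfrac12\,N(r,\infty;\mathfrak f)+S(r,\mathfrak f)\le\tfrac12\,T(r,\mathfrak f)+S(r,\mathfrak f)$.

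The core of the argument couples two estimates. On one hand, from the identity
$$\frac{1}{\mathfrak f^{q_0}}=\frac{\alpha\,(\mathfrak f')^{q_1}\cdots(\mathfrak f^{(k)})^{q_k}}{G}=\frac{1}{G}\cdot\alpha\,\mathfrak f^{\,Q-q_0}\prod_{j=1}^{k}\Bigl(\frac{\mathfrak f^{(j)}}{\mathfrak f}\Bigr)^{q_j},$$
the logarithmic derivative lemma yields $q_0\,m(r,1/\mathfrak f)\le m(r,1/G)+(Q-q_0)\,m(r,\mathfrak f)+S(r,\mathfrak f)$, which after passing from proximity to characteristic functions by the first main theorem produces a lower bound for $T(r,G-a)$ in terms of $T(r,\mathfrak f)$, $N(r,1/\mathfrak f)$ and $N(r,\mathfrak f)$. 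On the other hand, since $\alpha$ and $a$ are small functions of $\mathfrak f$, hence of $G$, the (moving–target) second main theorem applied to $G$ with the targets $0,\infty,a$ gives
$$T(r,G)\le\overline N(r,0;G)+\overline N(r,\infty;G)+\overline N\!\left(r,\frac{1}{G-a}\right)+S(r,\mathfrak f).$$
Here $\overline N(r,\infty;G)\le\tfrac12 T(r,\mathfrak f)+S(r,\mathfrak f)$ by the first paragraph, while $\overline N(r,0;G)$ is rewritten in terms of $\mathfrak f$ using that a zero of $\mathfrak f$ of multiplicity $m$ is a zero of $G$ of multiplicity at least $q_0 m$ (so these zeros contribute at least $q_0$ apiece to $N(r,0;G)$), the residual zeros — those coming solely from $\mathfrak f',\dots,\mathfrak f^{(k)}$ — being absorbed into an $S(r,\mathfrak f)$-term by the standard Milloux/Clunie estimates that bound $N(r,0;\mathfrak f^{(j)})$ and $N(r,\mathfrak f^{(j)}/\mathfrak f)$ by $N(r,0;\mathfrak f)$, $N(r,\infty;\mathfrak f)$ and $S(r,\mathfrak f)$. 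Substituting these into the two displayed inequalities and eliminating $m(r,1/G)$, $N(r,1/\mathfrak f)$ and $N(r,\infty;\mathfrak f)$, the numerical coefficients should collapse to give $(2q_0-3)\,T(r,\mathfrak f)\le 2\,\overline N\!\left(r,1/(G-a)\right)+S(r,\mathfrak f)$; dividing by $2q_0-3>0$ (recall $q_0\ge2$) gives the claimed bound.

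The step I expect to be the real obstacle is the precise bookkeeping of the zeros and poles of the intermediate derivatives $\mathfrak f',\dots,\mathfrak f^{(k)}$: a zero of $\mathfrak f$ of low multiplicity need not be a zero of every $\mathfrak f^{(j)}$, each $\mathfrak f^{(j)}$ carries ``extra'' zeros, and coincidences among the $\mathfrak f^{(j)}$ change the multiplicities in $G$; one must show — via the logarithmic derivative lemma together with Milloux's inequality and a Clunie-type argument — that all of these effects contribute only an $S(r,\mathfrak f)$-error (or cancel against terms already present). It is exactly this accounting, carried out sharply and combined with using $N_{1)}(r,\infty;\mathfrak f)=S(r,\mathfrak f)$ at full strength, that should be responsible for the precise constant $\tfrac{2}{2q_0-3}$ rather than a weaker one; a secondary, more routine point is to keep every invocation of the second main theorem attached to a function whose small functions are genuinely $S(r,\mathfrak f)$, which is guaranteed by the growth comparison established at the outset.
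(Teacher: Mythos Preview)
First, note that Theorem~I is quoted from \cite{10} and is not actually proved in the present paper; the paper's own proofs (of Theorems~\ref{th1} and~\ref{th2}, via Lemmas~\ref{3.9} and~\ref{3.10}) do, however, follow exactly the template that \cite{10} uses, so one can compare your proposal against that.

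Your overall architecture is right, but there is a genuine gap precisely where you yourself flag it. You propose to bound $\overline N(r,0;G)$ by handling zeros of $\mathfrak f$ and ``residual'' zeros of $\mathfrak f',\dots,\mathfrak f^{(k)}$ separately, asserting that the latter are ``absorbed into an $S(r,\mathfrak f)$-term by the standard Milloux/Clunie estimates''. That is false: zeros of $\mathfrak f^{(j)}$ away from the zeros of $\mathfrak f$ are in general as plentiful as $T(r,\mathfrak f)$ permits, and Milloux-type inequalities bound their counting function only in terms of $N(r,0;\mathfrak f)$, $N(r,\infty;\mathfrak f)$ and $S(r,\mathfrak f)$, not by $S(r,\mathfrak f)$ alone. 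Feeding such a bound back into your scheme costs extra copies of $T(r,\mathfrak f)$ on the right-hand side and destroys the constant $2/(2q_0-3)$.

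The device you are missing---and which the paper uses for its analogous results---is not to estimate $\overline N(r,0;G)$ in isolation. From $m(r,0;\mathfrak f^{\mu})\le m(r,0;G)+S(r,\mathfrak f)$ (with $\mu=Q$, via your own identity and the logarithmic-derivative lemma) one gets
\[
\mu\,T(r,\mathfrak f)\ \le\ N(r,0;\mathfrak f^{\mu})+T(r,G)-N(r,0;G)+S(r,\mathfrak f),
\]
and after inserting the second main theorem for $G$ the quantity one must control is the \emph{combination}
\[
N(r,0;\mathfrak f^{\mu})+\overline N(r,0;G)-N(r,0;G).
\]
Now evaluate this locally. At a zero of $G$ that is \emph{not} a zero of $\mathfrak f$ (your ``residual'' zeros) the local contribution is $0+1-(\text{order of the zero of }G)\le 0$: such points make the combination \emph{smaller}, so no estimate on them is needed at all---they help rather than hurt. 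At a zero of $\mathfrak f$ of order $q$ the contribution is $q\mu+1$ minus the order of $G$ there (which is at least $qq_0$), and this is what one bounds pointwise by multiples of $N(r,0;\mathfrak f)$ and $\overline N(r,0;\mathfrak f)$. Combined with $\overline N(r,\infty;G)\le\overline N(r,\infty;\mathfrak f)\le\tfrac12 N(r,\infty;\mathfrak f)+S(r,\mathfrak f)$ from the simple-pole hypothesis, this cancellation is exactly what produces the sharp coefficient $2/(2q_0-3)$.
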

Motivated by Theorem H and Theorem I, in this note, we are try to investigate the value distribution of a differential- difference monomials $\mathfrak{f}(z)^{q_0}(\mathfrak{f}(z+c))^{q_1}$ generated by a transcendental entire function with hyper-order less than one.
\section{main results}
\begin{theo}\label{th1}
Let $\mathfrak{f}$ be a transcendental entire function with hyper-order less than one and $ \alpha(z)$ $(\not\equiv 0,\infty)$ be a small function of $\mathfrak{f}$. Let $c$ be a non-zero complex number and $q_0\geq2,$ $q_1\geq 1$ be non-negative integers. Then for any small function $a(z)$ $(\not\equiv 0,\infty)$ of $\mathfrak{f}$,
$$ T(r,\mathfrak{f}) \leq  \frac{1}{\left(q_{0}-1\right)} \overline{N}\left(r, \frac{1}{\alpha(z) \mathfrak{f}(z)^{q_0}(\mathfrak{f}(z+c))^{q_1}-a(z)}\right)+ S(r,\mathfrak{f}).$$
\end{theo}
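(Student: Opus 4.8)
The plan is to reduce the statement to a Second Main Theorem estimate for a single auxiliary function and then exploit the slack coming from $q_1\ge 1$. Put $\beta(z):=\alpha(z)/a(z)$, which is again a small function of $\mathfrak{f}$, and set
$$G(z):=\beta(z)\,\mathfrak{f}(z)^{q_0}\bigl(\mathfrak{f}(z+c)\bigr)^{q_1},$$
so that $\alpha\mathfrak{f}(z)^{q_0}(\mathfrak{f}(z+c))^{q_1}-a=a\,(G-1)$. Since the zeros and poles of $a$ contribute only $S(r,\mathfrak{f})$ to any counting function, one gets $\overline{N}\bigl(r,\tfrac{1}{\alpha\mathfrak{f}(z)^{q_0}(\mathfrak{f}(z+c))^{q_1}-a}\bigr)=\overline{N}(r,1;G)+S(r,\mathfrak{f})$, so it suffices to prove $(q_0-1)\,T(r,\mathfrak{f})\le\overline{N}(r,1;G)+S(r,\mathfrak{f})$.

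The first step is to compute the characteristic $T(r,G)=(q_0+q_1)\,T(r,\mathfrak{f})+S(r,\mathfrak{f})$. The inequality ``$\le$'' is immediate from the shift invariance $T(r,\mathfrak{f}(z+c))=T(r,\mathfrak{f})+S(r,\mathfrak{f})$ (valid since $\rho_2(\mathfrak{f})<1$) together with $T(r,\beta)=S(r,\mathfrak{f})$. For the reverse inequality, write $h(z):=\mathfrak{f}(z+c)/\mathfrak{f}(z)$; then $G=\beta\,h^{q_1}\,\mathfrak{f}^{q_0+q_1}$, hence $\mathfrak{f}^{q_0+q_1}=G\,\beta^{-1}h^{-q_1}$, and since $\mathfrak{f}$ is entire,
$$(q_0+q_1)\,T(r,\mathfrak{f})=m\bigl(r,\mathfrak{f}^{q_0+q_1}\bigr)\le m(r,G)+m(r,1/\beta)+q_1\,m(r,1/h)+O(1)\le T(r,G)+S(r,\mathfrak{f}),$$
where $m(r,1/h)=m\bigl(r,\mathfrak{f}(z)/\mathfrak{f}(z+c)\bigr)=S(r,\mathfrak{f})$ by the difference analogue of the lemma on the logarithmic derivative applied with shift $-c$, and $m(r,1/\beta)\le T(r,\beta)+O(1)=S(r,\mathfrak{f})$. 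In particular $T(r,G)$ and $T(r,\mathfrak{f})$ have the same growth (and $G$ is transcendental), so $S(r,G)=S(r,\mathfrak{f})$.

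The second step applies the Second Main Theorem to $G$ with the three values $0$, $1$, $\infty$:
$$T(r,G)\le\overline{N}(r,0;G)+\overline{N}(r,1;G)+\overline{N}(r,\infty;G)+S(r,\mathfrak{f}).$$
As $\mathfrak{f}$ and $\mathfrak{f}(z+c)$ are entire, poles of $G$ come only from $\beta$, so $\overline{N}(r,\infty;G)\le T(r,\beta)+O(1)=S(r,\mathfrak{f})$; similarly every zero of $G$ is a zero of $\beta$, of $\mathfrak{f}$, or of $\mathfrak{f}(z+c)$, so
$$\overline{N}(r,0;G)\le\overline{N}(r,0;\beta)+\overline{N}(r,0;\mathfrak{f})+\overline{N}(r,0;\mathfrak{f}(z+c))\le 2\,T(r,\mathfrak{f})+S(r,\mathfrak{f}),$$
using $\overline{N}(r,0;\mathfrak{f}(z+c))\le N(r,0;\mathfrak{f}(z+c))\le T(r,\mathfrak{f}(z+c))+O(1)=T(r,\mathfrak{f})+S(r,\mathfrak{f})$. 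Substituting $T(r,G)=(q_0+q_1)\,T(r,\mathfrak{f})+S(r,\mathfrak{f})$ gives $(q_0+q_1-2)\,T(r,\mathfrak{f})\le\overline{N}(r,1;G)+S(r,\mathfrak{f})$; since $q_1\ge 1$ we have $q_0+q_1-2\ge q_0-1$, and $q_0\ge 2$ permits division by $q_0-1$, which yields the theorem. The only genuinely difference-theoretic input is the shift logarithmic derivative lemma (used for both $c$ and $-c$); the step needing the most care is the lower bound $T(r,G)\ge(q_0+q_1)\,T(r,\mathfrak{f})+S(r,\mathfrak{f})$ — i.e. checking that the factors $\beta$ and $h^{\pm1}$ are truly absorbed into $S(r,\mathfrak{f})$ — since without it the Second Main Theorem estimate cannot be converted into a lower bound for $T(r,\mathfrak{f})$.
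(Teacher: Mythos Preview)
Your proof is correct and, in fact, a bit cleaner than the paper's. Both arguments share the same backbone: feed $M_1[\mathfrak f]$ (your $G$) into the Second Main Theorem for $0,\,a,\,\infty$, discard the pole term since $\mathfrak f$ is entire, and use the difference logarithmic-derivative lemma to convert $m$-terms into $S(r,\mathfrak f)$. The difference lies in how the zero term is handled. The paper keeps the expression $N(r,0;\mathfrak f^{\mu})+\overline N(r,0;M_1)-N(r,0;M_1)$ from its Lemma~3.7 and then bounds it by a pointwise multiplicity case analysis (zeros of $\mathfrak f$ only, of $\mathfrak f(z+c)$ only, or common zeros), arriving at $(q_1+1)T(r,\mathfrak f)$ in the worst case. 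You instead establish the two-sided identity $T(r,G)=(q_0+q_1)T(r,\mathfrak f)+S(r,\mathfrak f)$ directly, and then use the elementary bound $\overline N(r,0;G)\le \overline N(r,0;\mathfrak f)+\overline N(r,0;\mathfrak f(z+c))+S(r,\mathfrak f)\le 2T(r,\mathfrak f)+S(r,\mathfrak f)$. This avoids the case split entirely and even yields the slightly stronger intermediate inequality $(q_0+q_1-2)T(r,\mathfrak f)\le \overline N(r,1;G)+S(r,\mathfrak f)$ before you weaken to $q_0-1$. The paper's route, on the other hand, is the one that generalizes more readily to the $\mathfrak f'(z+c)$ case of Theorem~2.2, where extra zeros of $\mathfrak f'(z+c)$ not coming from $\mathfrak f(z+c)$ make the analogue of your two-sided $T(r,G)$ identity less immediate.
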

\begin{theo}\label{th2}
Let $\mathfrak{f}$ be a transcendental entire function with hyper-order less than one and $ \alpha(z)$ $(\not\equiv 0,\infty)$ be a small function of $\mathfrak{f}$. Let $c$ be a non-zero complex number and $q_1\geq 1$ and $q_0> q_1+1$ be non-negative integers. Then for any small function  $a(z)(\not\equiv 0,\infty)$ of $\mathfrak{f}$,
$$ T(r,\mathfrak{f}) \leq  \frac{1}{\left(q_{0}-q_{1}-1\right)} \overline{N}\left(r, \frac{1}{\alpha(z) \mathfrak{f}(z)^{q_0}(\mathfrak{f}'(z+c))^{q_1}-a(z)}\right)+ S(r,\mathfrak{f}).$$
\end{theo}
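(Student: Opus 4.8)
The plan is to estimate the Nevanlinna characteristic of the differential-difference monomial $F(z):=\alpha(z)\,\mathfrak{f}(z)^{q_0}\big(\mathfrak{f}'(z+c)\big)^{q_1}$ from below and from above and to reconcile the two bounds. Put $\Phi:=F/a$. The lower estimate will come from a proximity-function identity, the upper one from the Second Main Theorem applied to $\Phi$ at the targets $0,1,\infty$. Throughout I would use: the classical lemma on the logarithmic derivative; its difference analogue $m\big(r,\mathfrak{f}(z+c)/\mathfrak{f}(z)\big)=S(r,\mathfrak{f})$ together with the shift relations $T(r,\mathfrak{f}(z+c))=T(r,\mathfrak{f})+S(r,\mathfrak{f})$ and $N(r,1/\mathfrak{f}(z+c))=N(r,1/\mathfrak{f})+S(r,\mathfrak{f})$, all available since $\rho_2(\mathfrak{f})<1$ (hence also $\rho_2(\mathfrak{f}')<1$); the First Main Theorem; and the fact that $\mathfrak{f}$, and hence $\mathfrak{f}'(z+c)$, is entire, so $N(r,F)=S(r,\mathfrak{f})$.

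\emph{Lower bound.} Starting from $\mathfrak{f}(z)^{-q_0}=\alpha(z)\big(\mathfrak{f}'(z+c)\big)^{q_1}/F(z)$ and passing to proximity functions,
\[ q_0\, m\!\left(r,\tfrac{1}{\mathfrak{f}}\right)\le q_1\, m\big(r,\mathfrak{f}'(z+c)\big)+m\!\left(r,\tfrac{1}{F}\right)+S(r,\mathfrak{f}). \]
Using $m\big(r,\mathfrak{f}'(z+c)\big)\le m\big(r,\mathfrak{f}(z+c)\big)+m\big(r,\mathfrak{f}'(z+c)/\mathfrak{f}(z+c)\big)=T(r,\mathfrak{f})+S(r,\mathfrak{f})$, rewriting each proximity function through $m(r,1/g)=T(r,g)-N(r,1/g)+O(1)$, and inserting
\[ N(r,1/F)=q_0\,N(r,1/\mathfrak{f})+q_1\,N\big(r,1/\mathfrak{f}'(z+c)\big)+S(r,\mathfrak{f}) \]
(the only correction coming from zeros or poles of $\alpha$ that meet zeros of $\mathfrak{f}$ or $\mathfrak{f}'(z+c)$, a contribution $\le N(r,\alpha)+N(r,1/\alpha)=S(r,\mathfrak{f})$), the $N(r,1/\mathfrak{f})$-terms cancel and one obtains
\[ T(r,F)\ge (q_0-q_1)\,T(r,\mathfrak{f})+q_1\,N\big(r,1/\mathfrak{f}'(z+c)\big)+S(r,\mathfrak{f}). \]
Since $q_0-q_1\ge2$, this forces $T(r,F)\asymp T(r,\mathfrak{f})$, so that $S(r,\Phi)=S(r,F)=S(r,\mathfrak{f})$, which legitimizes the error terms below.

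\emph{Upper bound and conclusion.} Applying the Second Main Theorem to $\Phi=F/a$ with targets $0,1,\infty$ and observing that $\overline{N}(r,\Phi)=S(r,\mathfrak{f})$ ($\mathfrak{f}$ entire, $\alpha,a$ small), that zeros of $\Phi$ lie among zeros of $\mathfrak{f}$, of $\mathfrak{f}'(z+c)$, of $\alpha$, and poles of $a$, and that zeros of $\Phi-1$ lie among zeros of $F-a$ up to zeros and poles of $a$, we get
\[ T(r,F)\le \overline{N}\!\left(r,\tfrac{1}{\mathfrak{f}}\right)+\overline{N}\!\left(r,\tfrac{1}{\mathfrak{f}'(z+c)}\right)+\overline{N}\!\left(r,\tfrac{1}{F-a}\right)+S(r,\mathfrak{f}). \]
Comparing with the lower bound and using $q_1\ge1$, so that $q_1 N\big(r,1/\mathfrak{f}'(z+c)\big)\ge\overline{N}\big(r,1/\mathfrak{f}'(z+c)\big)$, the $\mathfrak{f}'(z+c)$-terms cancel; then $\overline{N}(r,1/\mathfrak{f})\le T(r,\mathfrak{f})+O(1)$ and rearranging give $(q_0-q_1-1)\,T(r,\mathfrak{f})\le\overline{N}\big(r,1/(F-a)\big)+S(r,\mathfrak{f})$; dividing by $q_0-q_1-1\ge1$ — exactly where the hypothesis $q_0>q_1+1$ enters — yields the assertion.

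The step I expect to be the main obstacle is the lower bound: pinning down the exact coefficients in the formula for $N(r,1/F)$, verifying that every auxiliary term is genuinely $S(r,\mathfrak{f})$, and, crucially, justifying that $S(r,\Phi)$ and $S(r,F)$ may be replaced by $S(r,\mathfrak{f})$ so that the Second Main Theorem error term is harmless. The remaining manipulations are routine Nevanlinna bookkeeping.
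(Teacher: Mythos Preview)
Your argument is correct. The overall architecture matches the paper's: both combine a proximity-function comparison (via the logarithmic-derivative and difference lemmas) with the Second Main Theorem applied to the monomial. The difference is in how the counting-function terms are handled. The paper starts from $\mathfrak{f}^{-\mu}$ with $\mu=q_0+q_1$, uses $m\big(r,\mathfrak{f}'(z+c)/\mathfrak{f}(z)\big)=S(r,\mathfrak{f})$, arrives at the inequality
\[
\mu\,T(r,\mathfrak{f})\le N(r,0;\mathfrak{f}^{\mu})+\overline{N}(r,0;M_2)-N(r,0;M_2)+\overline{N}(r,a;M_2)+S(r,\mathfrak{f}),
\]
and then bounds the first three terms by a five-case pointwise analysis of how zeros of $\mathfrak{f}(z)$, $\mathfrak{f}(z+c)$ and $\mathfrak{f}'(z+c)$ can coincide. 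You instead start from $\mathfrak{f}^{-q_0}$, bound $m(r,\mathfrak{f}'(z+c))\le T(r,\mathfrak{f})+S(r,\mathfrak{f})$ directly, and exploit the exact additive identity $N(r,1/F)=q_0\,N(r,1/\mathfrak{f})+q_1\,N\big(r,1/\mathfrak{f}'(z+c)\big)+S(r,\mathfrak{f})$, which makes the $N(r,1/\mathfrak{f})$ terms cancel globally and lets the surviving $q_1\,N\big(r,1/\mathfrak{f}'(z+c)\big)$ absorb the $\overline{N}\big(r,1/\mathfrak{f}'(z+c)\big)$ from the Second Main Theorem. This repackaging eliminates the case analysis entirely and is arguably cleaner; the paper's route, on the other hand, makes the local multiplicity accounting fully explicit. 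Both reach exactly the same final inequality $(q_0-q_1-1)\,T(r,\mathfrak{f})\le \overline{N}(r,1/(F-a))+S(r,\mathfrak{f})$.
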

\section{Some important Lemmas}
To establish our results, we require the following lemmas:
\begin{lem}\label{3.1} (\cite{03})
Let $\mathfrak{f}(z)$ be a non-constant transcendental meromorphic function with hyper-order less than one, and let $c$ be a non-zero finite complex number. Then $$ m\left(r,\frac{\mathfrak{f}(z+c)}{\mathfrak{f}(z)}\right)=S(r,\mathfrak{f})~~\text{and}~~ m\left(r,\frac{\mathfrak{f}(z)}{\mathfrak{f}(z+c)}\right)=S(r,\mathfrak{f}).$$
\end{lem}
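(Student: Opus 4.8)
The plan is to prove the first estimate, $m\!\left(r,\frac{\mathfrak{f}(z+c)}{\mathfrak{f}(z)}\right)=S(r,\mathfrak{f})$, and to note that the second is entirely symmetric: it is the first with the roles of $\mathfrak{f}(z)$ and $\mathfrak{f}(z+c)$ interchanged, equivalently with the forward shift $+c$ replaced by $-c$, so the same pointwise analysis applies verbatim. Accordingly I shall only describe the argument for the forward-shift quotient. The whole matter thus reduces to bounding the proximity function of $\mathfrak{f}(z+c)/\mathfrak{f}(z)$ by a quantity of type $S(r,\mathfrak{f})$ for $r$ outside an exceptional set $E$ of finite logarithmic measure.

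For the pointwise analysis I would start from the Poisson--Jensen formula on a disc of radius $R>r=|z|$: for $|w|<R$ with $\mathfrak{f}(w)\neq 0,\infty$,
\[
\log|\mathfrak{f}(w)|=\frac{1}{2\pi}\int_0^{2\pi}\operatorname{Re}\frac{Re^{i\theta}+w}{Re^{i\theta}-w}\,\log|\mathfrak{f}(Re^{i\theta})|\,d\theta
-\sum_{|a_\mu|<R}\log\left|\frac{R^2-\bar a_\mu w}{R(w-a_\mu)}\right|
+\sum_{|b_\nu|<R}\log\left|\frac{R^2-\bar b_\nu w}{R(w-b_\nu)}\right|,
\]
where $a_\mu$ and $b_\nu$ run over the zeros and poles of $\mathfrak{f}$ in $|w|<R$ (the pole sum being empty when $\mathfrak{f}$ is entire). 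Applying this at $w=z+c$ and at $w=z$ and subtracting, $\log\bigl|\mathfrak{f}(z+c)/\mathfrak{f}(z)\bigr|$ splits into a \emph{kernel difference}, controlled by the modulus of continuity of the Poisson kernel and hence by a factor of order $|c|R/(R-r)^2$ times $\frac{1}{2\pi}\int_0^{2\pi}\bigl|\log|\mathfrak{f}(Re^{i\theta})|\bigr|\,d\theta\lesssim T(R,\mathfrak{f})$, and a \emph{zero/pole difference}, a sum over $|a_\mu|,|b_\nu|<R$ of elementary logarithmic terms whose total, after integrating the positive part over $|z|=r$, is controlled by $n(R,\mathfrak{f})+n(R,1/\mathfrak{f})$ times logarithmic factors, and therefore (via the standard passage $n(R,\cdot)\lesssim T(R',\mathfrak{f})/\log(R'/R)$) by a slightly larger characteristic.

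Integrating the positive part of this pointwise bound over $|z|=r$ turns the left-hand side into $m\!\left(r,\frac{\mathfrak{f}(z+c)}{\mathfrak{f}(z)}\right)$ and yields an estimate of the shape
\[
m\!\left(r,\frac{\mathfrak{f}(z+c)}{\mathfrak{f}(z)}\right)\lesssim \frac{|c|R}{(R-r)^2}\,T(R,\mathfrak{f})+\Bigl(n(R,\mathfrak{f})+n\!\left(R,\tfrac{1}{\mathfrak{f}}\right)\Bigr)\log\frac{eR}{R-r}+O(1),
\]
valid for every $R>r$. The crux is then to choose $R=r+s(r)$ with an increment $s(r)$ small relative to $r$ but large enough to tame the factor $R/(R-r)^2$, while keeping $T(R,\mathfrak{f})=(1+o(1))\,T(r,\mathfrak{f})$. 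This is exactly where the hypothesis $\rho_2(\mathfrak{f})<1$ is indispensable: a Borel-type growth lemma shows that, outside a set $E$ of finite logarithmic measure, one may take $s(r)=r(\log r)^{-\delta}$ with $\delta<1-\rho_2(\mathfrak{f})$ so that the characteristic barely grows across the annulus, leading finally to
\[
m\!\left(r,\frac{\mathfrak{f}(z+c)}{\mathfrak{f}(z)}\right)=o\!\left(\frac{T(r,\mathfrak{f})}{r^{1-\rho_2(\mathfrak{f})-\varepsilon}}\right)=S(r,\mathfrak{f}),\qquad r\notin E.
\]
I expect the main obstacle to be precisely this growth and exceptional-set bookkeeping. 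For functions of finite order one simply sets $R=2r$ and the estimate is immediate; but when $T(r,\mathfrak{f})$ may grow faster than any power of $r$, one must balance the increment $s(r)$ against the hyper-order and invoke the refined Borel lemma that governs the condition $\rho_2(\mathfrak{f})<1$, ensuring that both the kernel term and the counting-function term remain within $S(r,\mathfrak{f})$.
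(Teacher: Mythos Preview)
The paper does not supply its own proof of this lemma: it is stated with a citation to \cite{03} and thereafter used as a black box, so there is nothing in the paper to compare your argument against.

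That said, your sketch is essentially the standard route taken in the literature. The Poisson--Jensen subtraction at $w=z+c$ and $w=z$, followed by the split into a kernel-difference term bounded via $|c|R/(R-r)^2\cdot T(R,\mathfrak{f})$ and a zero/pole term controlled by unintegrated counting functions, is exactly the method of Chiang and Feng for the finite-order case. Your refinement---choosing the increment $s(r)=r(\log r)^{-\delta}$ and invoking a Borel-type growth lemma to keep $T(r+s(r),\mathfrak{f})=(1+o(1))T(r,\mathfrak{f})$ outside a set of finite logarithmic measure---is the mechanism by which Halburd, Korhonen and Tohge later extended the estimate to functions of hyper-order strictly less than one, yielding the sharper form $m\bigl(r,\mathfrak{f}(z+c)/\mathfrak{f}(z)\bigr)=o\bigl(T(r,\mathfrak{f})/r^{1-\rho_2(\mathfrak{f})-\varepsilon}\bigr)$. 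As a plan it is sound; the only caveat is that the reference \cite{03} cited by the paper actually covers only the finite-order situation, so the hyper-order version you are proving strictly rests on the later Halburd--Korhonen--Tohge work rather than on the source the paper names.
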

\begin{lem}\label{3.2}(\cite{16}, Lemma 6)
Let $\mathfrak{f}(z)$ be a meromorphic function with hyper-order less than one, and let $c$ be a non-zero finite complex number. Then
$$T(r,\mathfrak{f}(z+c)) =T(r,\mathfrak{f})+ S(r,\mathfrak{f})~~\text{and}~~N(r,\mathfrak{f}(z+c)) =N(r,\mathfrak{f})+ S(r,\mathfrak{f}).$$
\end{lem}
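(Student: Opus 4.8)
The plan is to prove the two displayed identities separately and then combine them, writing $g(z):=\mathfrak{f}(z+c)$ throughout. The proximity part will come straight from Lemma \ref{3.1}, while the counting part will be reduced, by an elementary comparison of pole counts in shifted disks, to a radius-shift estimate which is where the hypothesis $\rho_2(\mathfrak{f})<1$ does the real work. Once $m(r,g)=m(r,\mathfrak{f})+S(r,\mathfrak{f})$ and $N(r,g)=N(r,\mathfrak{f})+S(r,\mathfrak{f})$ are in hand, adding them gives $T(r,g)=T(r,\mathfrak{f})+S(r,\mathfrak{f})$ by the first fundamental theorem.

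For the proximity functions I would factor $g(z)=\mathfrak{f}(z)\cdot\dfrac{\mathfrak{f}(z+c)}{\mathfrak{f}(z)}$ and use subadditivity of $m(r,\cdot)$ over products together with the first estimate of Lemma \ref{3.1}:
$$m(r,g)\le m(r,\mathfrak{f})+m\!\left(r,\frac{\mathfrak{f}(z+c)}{\mathfrak{f}(z)}\right)=m(r,\mathfrak{f})+S(r,\mathfrak{f}).$$
Applying the same reasoning to $\mathfrak{f}(z)=g(z)\cdot\dfrac{\mathfrak{f}(z)}{\mathfrak{f}(z+c)}$ and invoking the second estimate of Lemma \ref{3.1} gives the reverse inequality, so $m(r,g)=m(r,\mathfrak{f})+S(r,\mathfrak{f})$.

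For the counting functions I would argue geometrically. A pole of $g$ in the disk $|z|\le t$ is precisely a pole of $\mathfrak{f}$ in the shifted disk $|w-c|\le t$, and the inclusions $\{|w|\le t-|c|\}\subseteq\{|w-c|\le t\}\subseteq\{|w|\le t+|c|\}$ yield the pointwise bounds
$$n(t-|c|,\mathfrak{f})\le n(t,g)\le n(t+|c|,\mathfrak{f})$$
for the unintegrated pole-counting functions. Integrating these against $dt/t$ expresses $N(r,g)$ in terms of $N(r\pm|c|,\mathfrak{f})$ up to an error governed by the growth of $n(t,\mathfrak{f})$, so the whole matter is reduced to showing that enlarging or shrinking the radius by the fixed constant $|c|$ perturbs $N(r,\mathfrak{f})$ only by $S(r,\mathfrak{f})$.

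This radius-shift estimate is the crux, and it is exactly here that $\rho_2(\mathfrak{f})<1$ is indispensable. I would invoke a Borel-type growth lemma: because $\log\log T(r,\mathfrak{f})\le(\rho_2(\mathfrak{f})+\varepsilon)\log r$ with $\rho_2(\mathfrak{f})+\varepsilon<1$, the characteristic grows slowly enough that $T(r+|c|,\mathfrak{f})=T(r,\mathfrak{f})+S(r,\mathfrak{f})$ for all $r$ outside a set of finite measure; since $N(r,\mathfrak{f})$ is nondecreasing and bounded above by $T(r,\mathfrak{f})$, the same comparison $N(r\pm|c|,\mathfrak{f})=N(r,\mathfrak{f})+S(r,\mathfrak{f})$ follows, and it also absorbs the integration error noted above. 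Substituting back gives $N(r,g)=N(r,\mathfrak{f})+S(r,\mathfrak{f})$, and combining with the proximity identity yields $T(r,g)=T(r,\mathfrak{f})+S(r,\mathfrak{f})$. The main obstacle is precisely this growth lemma: for hyper-order equal to one or larger a bounded change of radius can shift $T$ and $N$ by an amount comparable to $T(r,\mathfrak{f})$ itself, so the argument — and the conclusion — genuinely fail without the hyper-order restriction.
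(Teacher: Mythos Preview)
The paper does not supply a proof of Lemma \ref{3.2}; it is simply quoted from \cite{16}, so there is no ``paper's own proof'' to compare against. Your sketch is in fact the standard route by which this result is established in the literature (Chiang--Feng \cite{03} for finite order, and its extension to hyper-order less than one by Halburd--Korhonen--Tohge): handle the proximity function via the difference analogue of the logarithmic-derivative lemma (your use of Lemma \ref{3.1} is exactly right), then reduce the counting-function part, through the disk inclusions you wrote down, to a radius-shift estimate of the form $\phi(r+|c|)=\phi(r)+o(T(r,\mathfrak{f}))$ outside an exceptional set, and prove that estimate by a Borel-type growth lemma exploiting $\rho_2(\mathfrak{f})<1$.

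Two small points where your write-up would need tightening if you actually carried it out. First, after integrating the inequalities $n(t-|c|,\mathfrak{f})\le n(t,g)\le n(t+|c|,\mathfrak{f})$ against $dt/t$, the substitution $s=t\pm|c|$ changes the weight, so you do not land exactly on $N(r\pm|c|,\mathfrak{f})$; one has to control the discrepancy (e.g.\ via $n(r,\mathfrak{f})\le N(2r,\mathfrak{f})/\log 2$), and this extra term is again absorbed by the radius-shift lemma. Second, the sentence ``since $N(r,\mathfrak{f})$ is nondecreasing and bounded above by $T(r,\mathfrak{f})$, the same comparison \ldots\ follows'' is not quite a deduction: $N\le T$ does not by itself give $N(r+|c|)-N(r)\le T(r+|c|)-T(r)$. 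What does work is to observe that $N(r,\mathfrak{f})$ is itself a nondecreasing continuous function with $\limsup_{r\to\infty}\frac{\log\log N(r,\mathfrak{f})}{\log r}\le\rho_2(\mathfrak{f})<1$, so the same Borel-type lemma applies directly to $N$. With those adjustments your argument is correct and is essentially how the cited result is proved.
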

\begin{lem}\label{3.3}
Let $\mathfrak{f}$ be a transcendental meromorphic function with hyper-order less than one. Then $$m\left(r,\frac{{\mathfrak{f}}^{(k)}(z+c)} {\mathfrak{f}(z+d)}\right)= S(r,\mathfrak{f}),$$ for all $z$ satisfies $|z|=r\notin E$, $E$ is a set with  finite logarithmic measure, where $c$ and $d$ are complex constants and $k$ is a non-negative  integer.
\end{lem}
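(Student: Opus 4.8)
The plan is to factor the quotient as a product of a ``logarithmic-derivative'' part and a ``shift-ratio'' part, and to control each factor separately, using Lemma~\ref{3.2} at the end to see that all the error terms attached to the various shifts of $\mathfrak{f}$ coincide with $S(r,\mathfrak{f})$.

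First I would note that it suffices to treat $k\ge 1$ (for $k=0$ the assertion is the second step below, or it is trivial when $c=d$). Writing
\[
\frac{\mathfrak{f}^{(k)}(z+c)}{\mathfrak{f}(z+d)}
= \frac{\mathfrak{f}^{(k)}(z+c)}{\mathfrak{f}(z+c)}\cdot\frac{\mathfrak{f}(z+c)}{\mathfrak{f}(z+d)},
\]
subadditivity of the proximity function gives
\[
m\!\left(r,\frac{\mathfrak{f}^{(k)}(z+c)}{\mathfrak{f}(z+d)}\right)
\le m\!\left(r,\frac{\mathfrak{f}^{(k)}(z+c)}{\mathfrak{f}(z+c)}\right)
+ m\!\left(r,\frac{\mathfrak{f}(z+c)}{\mathfrak{f}(z+d)}\right),
\]
so it is enough to estimate each term on the right.

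For the first term, put $g(z):=\mathfrak{f}(z+c)$; then $g^{(k)}(z)=\mathfrak{f}^{(k)}(z+c)$, and $g$ is again a transcendental meromorphic function, so the classical lemma on the logarithmic derivative (applied and iterated to $g$, which needs no hypothesis on the hyper-order) yields $m\bigl(r,g^{(k)}/g\bigr)=S(r,g)$ outside a set of finite logarithmic measure. Since Lemma~\ref{3.2} gives $T(r,g)=T(r,\mathfrak{f})+S(r,\mathfrak{f})$, we have $S(r,g)=S(r,\mathfrak{f})$, so this term is $S(r,\mathfrak{f})$. For the second term, if $c=d$ it is $0$; otherwise put $h(z):=\mathfrak{f}(z+d)$, so that $\mathfrak{f}(z+c)=h\bigl(z+(c-d)\bigr)$ with $c-d\ne 0$. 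As a shift of $\mathfrak{f}$, the function $h$ is a non-constant transcendental meromorphic function of the same hyper-order as $\mathfrak{f}$, hence less than one, so Lemma~\ref{3.1} applies to $h$ with shift $c-d$ and gives $m\bigl(r,h(z+(c-d))/h(z)\bigr)=S(r,h)$; again $S(r,h)=S(r,\mathfrak{f})$ by Lemma~\ref{3.2}. Adding the two estimates, and using that a finite union of sets of finite logarithmic measure again has finite logarithmic measure, completes the proof.

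There is no genuine obstacle in this argument; the only points deserving attention are that a shift of $\mathfrak{f}$ has the same hyper-order as $\mathfrak{f}$ (so that Lemma~\ref{3.1} is applicable to $h$) and that the error terms $S(r,\mathfrak{f}(z+c))$, $S(r,\mathfrak{f}(z+d))$ and $S(r,\mathfrak{f})$ all agree --- both of which are furnished by Lemma~\ref{3.2} --- together with the routine bookkeeping of the exceptional set.
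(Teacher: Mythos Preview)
Your proof is correct and follows essentially the same approach as the paper: factor the quotient, bound the logarithmic-derivative piece by the classical lemma, and bound the shift piece by Lemma~\ref{3.1}. The only cosmetic difference is that the paper routes the shift ratio through $\mathfrak{f}(z)$ itself, writing $\dfrac{\mathfrak{f}(z+c)}{\mathfrak{f}(z+d)}=\dfrac{\mathfrak{f}(z+c)}{\mathfrak{f}(z)}\cdot\dfrac{\mathfrak{f}(z)}{\mathfrak{f}(z+d)}$ and applying Lemma~\ref{3.1} twice directly to $\mathfrak{f}$, whereas you apply Lemma~\ref{3.1} once to the shifted function $h(z)=\mathfrak{f}(z+d)$; both are equally valid.
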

\begin{proof} In view of Lemma \ref{3.1}, we have
\begin{eqnarray*}
m\left(r,\frac{{\mathfrak{f}}^{(k)}(z+c)} {\mathfrak{f}(z+d)}\right) &\leq & m\left(r,\frac{{\mathfrak{f}}^{(k)}(z+c)} {\mathfrak{f}(z+c)}\right)+ m\left(r,\frac{\mathfrak{f}(z+c)} {\mathfrak{f}(z)}\right)+  m\left(r,\frac{\mathfrak{f}(z)} {\mathfrak{f}(z+d)}\right)\\
& \leq & S(r,\mathfrak{f}).
\end{eqnarray*}
Thus, the lemma is proved.
\end{proof}
\begin{lem}\label{3.4}
Let $\mathfrak{f}$ be a transcendental entire function with hyper-order less than one and $ \alpha(z)$ $(\not\equiv 0,\infty)$ be a small function of $\mathfrak{f}$. Let $ M_1[\mathfrak{f}]= \alpha(z) \mathfrak{f}(z)^{q_0}(\mathfrak{f}(z+c))^{q_1}$, where $c$ is a non-zero complex number and $q_0\geq2,$ $q_1\geq 1$ are integers. Then $$T(r,M_1[\mathfrak{f}]) \leq (q_0 + q_1) T(r,\mathfrak{f})+ S(r,\mathfrak{f}).$$
\end{lem}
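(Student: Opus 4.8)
The plan is to bound $T(r,M_1[\mathfrak{f}])$ by splitting it into its two Nevanlinna constituents, the integrated counting function of poles $N(r,M_1[\mathfrak{f}])$ and the proximity function $m(r,M_1[\mathfrak{f}])$, and to show the first is $S(r,\mathfrak{f})$ while the second is at most $(q_0+q_1)T(r,\mathfrak{f})+S(r,\mathfrak{f})$. Adding the two pieces then yields the assertion.

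First I would handle the poles. Since $\mathfrak{f}$ is entire, so is its shift $\mathfrak{f}(z+c)$, hence the only poles of $M_1[\mathfrak{f}]=\alpha(z)\mathfrak{f}(z)^{q_0}(\mathfrak{f}(z+c))^{q_1}$ can come from the small function $\alpha(z)$. Therefore $N(r,M_1[\mathfrak{f}])\le q_1\cdot 0 + q_0\cdot 0 + N(r,\alpha)\le T(r,\alpha)=S(r,\mathfrak{f})$.

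Next, for the proximity function I would use subadditivity of $m(r,\cdot)$ under products together with the identities $m(r,\mathfrak{f}^{q_0})=q_0\,m(r,\mathfrak{f})$ and $m(r,(\mathfrak{f}(z+c))^{q_1})=q_1\,m(r,\mathfrak{f}(z+c))$, to get $m(r,M_1[\mathfrak{f}])\le m(r,\alpha)+q_0\,m(r,\mathfrak{f})+q_1\,m(r,\mathfrak{f}(z+c))+O(1)$. Here $m(r,\alpha)=S(r,\mathfrak{f})$, and since $\mathfrak{f}$ is entire, $m(r,\mathfrak{f})=T(r,\mathfrak{f})$. For the shifted term I would invoke Lemma \ref{3.1}: $m(r,\mathfrak{f}(z+c))\le m(r,\mathfrak{f}(z))+m\!\left(r,\tfrac{\mathfrak{f}(z+c)}{\mathfrak{f}(z)}\right)=T(r,\mathfrak{f})+S(r,\mathfrak{f})$; alternatively Lemma \ref{3.2} gives $m(r,\mathfrak{f}(z+c))=T(r,\mathfrak{f}(z+c))=T(r,\mathfrak{f})+S(r,\mathfrak{f})$ because the shift is again entire. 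Either route gives $m(r,M_1[\mathfrak{f}])\le (q_0+q_1)T(r,\mathfrak{f})+S(r,\mathfrak{f})$.

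Combining, $T(r,M_1[\mathfrak{f}])=m(r,M_1[\mathfrak{f}])+N(r,M_1[\mathfrak{f}])\le (q_0+q_1)T(r,\mathfrak{f})+S(r,\mathfrak{f})$, as claimed. There is no genuine obstacle here; the only points that require a little care are observing that the poles of $M_1[\mathfrak{f}]$ are controlled entirely by $\alpha$, and using the hyper-order-less-than-one hypothesis (through Lemma \ref{3.1} or Lemma \ref{3.2}) to absorb the discrepancy between $\mathfrak{f}$ and its shift into the error term $S(r,\mathfrak{f})$.
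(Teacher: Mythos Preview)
Your proof is correct and is essentially a fully written-out version of the paper's own argument, which simply says the result is obvious in view of Lemma~\ref{3.2}. Both approaches use the shift estimate $T(r,\mathfrak{f}(z+c))=T(r,\mathfrak{f})+S(r,\mathfrak{f})$ (or equivalently the quotient estimate of Lemma~\ref{3.1}) together with subadditivity of the characteristic under products; your extra care in isolating the pole contribution from $\alpha$ is a nice touch but changes nothing substantive.
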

\begin{proof}
In view of Lemma (\ref{3.2}), the proof is obvious. \end{proof}
\begin{lem}\label{3.6}
Let $\mathfrak{f}$ be a transcendental entire function with hyper-order less than one and $ \alpha(z)$ $(\not\equiv 0,\infty)$ be a small function of $\mathfrak{f}$. Let $ M_1[\mathfrak{f}]= \alpha \mathfrak{f}(z)^{q_0}(\mathfrak{f}(z+c))^{q_1}$, where $c$ is a non-zero complex number and $q_0\geq2,$ $q_1\geq 1$ are non-negative integers. Then $M_1[\mathfrak{f}]$ is not identically constant.
\end{lem}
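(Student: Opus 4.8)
The plan is to argue by contradiction. Suppose, to the contrary, that
\[
M_1[\mathfrak{f}]=\alpha(z)\,\mathfrak{f}(z)^{q_0}\bigl(\mathfrak{f}(z+c)\bigr)^{q_1}\equiv C
\]
for some constant $C$. I will aim to deduce an inequality of the shape $(q_0+q_1)\,T(r,\mathfrak{f})\le S(r,\mathfrak{f})$, which is impossible since $\mathfrak{f}$ is transcendental and $q_0+q_1\ge 3$. First I would dispose of the case $C=0$: since $\alpha\not\equiv 0$, the identity forces $\mathfrak{f}(z)^{q_0}\bigl(\mathfrak{f}(z+c)\bigr)^{q_1}\equiv 0$, and because $\mathfrak{f}$ is entire (its zeros, if any, are isolated) this yields $\mathfrak{f}\equiv 0$, contradicting transcendence. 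So from now on $C\neq 0$.

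With $C\neq 0$ we may rewrite the hypothesis as $\mathfrak{f}(z)^{q_0}\bigl(\mathfrak{f}(z+c)\bigr)^{q_1}=C/\alpha(z)$, an expression whose Nevanlinna characteristic is
\[
T\!\left(r,\tfrac{C}{\alpha}\right)=T(r,\alpha)+O(1)=S(r,\mathfrak{f}),
\]
so in particular $m\bigl(r,\mathfrak{f}(z)^{q_0}(\mathfrak{f}(z+c))^{q_1}\bigr)=S(r,\mathfrak{f})$. The key manipulation is then to turn the mixed monomial into a pure power of $\mathfrak{f}$ by inserting the factor $\mathfrak{f}(z)^{q_1}/\mathfrak{f}(z+c)^{q_1}$: since $\mathfrak{f}$ is entire, $m(r,\mathfrak{f}^{q_0+q_1})=(q_0+q_1)\,m(r,\mathfrak{f})=(q_0+q_1)\,T(r,\mathfrak{f})$, and
\[
(q_0+q_1)\,T(r,\mathfrak{f})=m\bigl(r,\mathfrak{f}^{q_0+q_1}\bigr)
\le m\bigl(r,\mathfrak{f}(z)^{q_0}(\mathfrak{f}(z+c))^{q_1}\bigr)+q_1\,m\!\left(r,\frac{\mathfrak{f}(z)}{\mathfrak{f}(z+c)}\right).
\]
By Lemma \ref{3.1} the last proximity term is $S(r,\mathfrak{f})$, and combined with the previous paragraph this gives $(q_0+q_1)\,T(r,\mathfrak{f})=S(r,\mathfrak{f})$. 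Dividing by $T(r,\mathfrak{f})\to\infty$ and using $q_0+q_1\ge 3$ produces the desired contradiction, so $M_1[\mathfrak{f}]$ cannot be identically constant.

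The main obstacle here is conceptual rather than computational: one should resist trying to bound $m\bigl(r,\mathfrak{f}(z)^{q_0}(\mathfrak{f}(z+c))^{q_1}\bigr)$ from below term-by-term, since proximity functions can cancel; instead the shift-quotient estimate of Lemma \ref{3.1} must be used to collapse the monomial to $\mathfrak{f}^{q_0+q_1}$, whose proximity function is known exactly because $\mathfrak{f}$ is entire. Once that idea is in place the argument is a one-line estimate, with the only bookkeeping being the $C=0$ degenerate case and the $O(1)$ adjustments for $\alpha$.
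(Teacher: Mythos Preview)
Your proof is correct, and it rests on the same two ingredients as the paper's: the factorisation $\mathfrak{f}^{q_0+q_1}=\bigl[\mathfrak{f}(z)^{q_0}\mathfrak{f}(z+c)^{q_1}\bigr]\cdot\bigl[\mathfrak{f}(z)/\mathfrak{f}(z+c)\bigr]^{q_1}$ together with Lemma~\ref{3.1} to kill the shift quotient. The execution differs, though. The paper does not argue by contradiction; instead it takes characteristics of the reciprocal identity $1/\mathfrak{f}^{\mu}=\alpha\,(\mathfrak{f}(z+c)/\mathfrak{f}(z))^{q_1}\cdot 1/M_1[\mathfrak{f}]$, bounds the resulting $N$-terms (using that $\mathfrak{f}$ is entire and that zeros of $\mathfrak{f}$ are zeros of $M_1[\mathfrak{f}]$), and arrives at the unconditional inequality $\mu\,T(r,\mathfrak{f})\le (q_1+1)\,T(r,M_1[\mathfrak{f}])+S(r,\mathfrak{f})$, from which non-constancy follows at once. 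Your route is tidier for this lemma specifically: by assuming $M_1[\mathfrak{f}]$ constant and working with proximity functions you exploit $T(r,\mathfrak{f})=m(r,\mathfrak{f})$ directly and never see a counting function. The paper's route, on the other hand, yields a quantitative lower bound on $T(r,M_1[\mathfrak{f}])$ in terms of $T(r,\mathfrak{f})$ that survives beyond the lemma.
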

\begin{proof}
Define $\mu= q_0+q_1$ be degree of the monomial $M_1[\mathfrak{f}]$. Note that $$ \frac{S(r, M_1[\mathfrak{f}])}{T(r,\mathfrak{f})}=  \frac{S(r, M_1[\mathfrak{f}])}{T(r,M_1[\mathfrak{f}])}\cdot\frac{T(r,M_1[\mathfrak{f}])}{T(r,\mathfrak{f})}.$$
Thus we can conclude that $ S(r, M_1[\mathfrak{f}])= S(r, \mathfrak{f}).$
Since $$\frac{1}{\mathfrak{f}^{\mu}} = \alpha(z) \left(\frac{\mathfrak{f}(z+c)}{\mathfrak{f}(z)}\right)^{q_1}\frac{1}{M_1[\mathfrak{f}]},$$
therefore, by the 1st fundamental theorem and Lemma (\ref{3.1}), we have
\begin{eqnarray*}{}
\mu T\left(r,\mathfrak{f}\right) & \leq & q_1 T\left(r,\frac{\mathfrak{f}(z+c)}{\mathfrak{f}(z)}\right) + T\left(r,\frac{1}{M_1[\mathfrak{f}] }\right) + S(r, \mathfrak{f})\\
 & \leq &  q_1 N\left(r,\frac{\mathfrak{f}(z+c)}{\mathfrak{f}(z)}\right) + T\left(r,M_1[\mathfrak{f}]\right) + S(r, \mathfrak{f})\\
& \leq & q_1 [N(r,0;\mathfrak{f}) + N(r,\infty ;\mathfrak{f}(z+c))]+T\left(r,M_1[\mathfrak{f}]\right) + S(r, \mathfrak{f})\\
& \leq & q_1 N(r,0;\mathfrak{f}) +T\left(r,M_1[\mathfrak{f}]\right) + S(r, \mathfrak{f})\\
& \leq & (q_1+1) T(r,M_1[\mathfrak{f}]) + S(r, \mathfrak{f})
\end{eqnarray*}
Therefore $M_1[\mathfrak{f}]$ is not identically constant.
\end{proof}
\begin{lem}\label{3.7}
Let $\mathfrak{f}$ be a transcendental entire function with hyper-order less than one and $ \alpha(z)$ $(\not\equiv 0,\infty)$ be a small function of $\mathfrak{f}$. Let $ M_2[\mathfrak{f}]= \alpha(z) \mathfrak{f}(z)^{q_0}(\mathfrak{f}'(z+c))^{q_1}$, where $c$ is a non-zero complex number and $q_0\geq2,$ $q_1\geq 1$ are non-negative integers. Then $M_2[\mathfrak{f}]$ is not identically constant.
\end{lem}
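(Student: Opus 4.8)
The plan is to mimic the proof of Lemma~\ref{3.6}, with the shift $\mathfrak{f}(z+c)$ replaced by the shifted derivative $\mathfrak{f}'(z+c)$ and with Lemma~\ref{3.3} used in place of Lemma~\ref{3.1}. Put $\mu=q_0+q_1$ for the degree of $M_2[\mathfrak{f}]$. Since $\mathfrak{f}$ is transcendental it is non-constant, so $\mathfrak{f}\not\equiv 0$ and $\mathfrak{f}'\not\equiv 0$, whence $M_2[\mathfrak{f}]\not\equiv 0$; the content of the lemma is therefore to rule out $M_2[\mathfrak{f}]$ being a non-zero constant. As in Lemma~\ref{3.6} one also records $S(r,M_2[\mathfrak{f}])=S(r,\mathfrak{f})$.

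The key identity to start from is
$$\frac{1}{\mathfrak{f}(z)^{\mu}}=\frac{\alpha(z)}{M_2[\mathfrak{f}]}\left(\frac{\mathfrak{f}'(z+c)}{\mathfrak{f}(z)}\right)^{q_1}.$$
Applying the first fundamental theorem and the subadditivity of the Nevanlinna characteristic to the right-hand side, and using $T(r,\alpha)=S(r,\mathfrak{f})$, I would obtain
$$\mu\,T(r,\mathfrak{f})\leq T(r,M_2[\mathfrak{f}])+q_1\,T\!\left(r,\frac{\mathfrak{f}'(z+c)}{\mathfrak{f}(z)}\right)+S(r,\mathfrak{f}).$$
Next, split $T\!\left(r,\tfrac{\mathfrak{f}'(z+c)}{\mathfrak{f}(z)}\right)=m\!\left(r,\tfrac{\mathfrak{f}'(z+c)}{\mathfrak{f}(z)}\right)+N\!\left(r,\tfrac{\mathfrak{f}'(z+c)}{\mathfrak{f}(z)}\right)$. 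By Lemma~\ref{3.3} (with $k=1$ and $d=0$) the proximity term is $S(r,\mathfrak{f})$; and since $\mathfrak{f}$ is entire, $\mathfrak{f}'(z+c)$ has no poles, so the poles of the quotient occur only at the zeros of $\mathfrak{f}(z)$, whence $N\!\left(r,\tfrac{\mathfrak{f}'(z+c)}{\mathfrak{f}(z)}\right)\leq N(r,0;\mathfrak{f})\leq T(r,\mathfrak{f})+S(r,\mathfrak{f})$.

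Feeding these back gives $\mu\,T(r,\mathfrak{f})\leq T(r,M_2[\mathfrak{f}])+q_1\,T(r,\mathfrak{f})+S(r,\mathfrak{f})$, that is,
$$q_0\,T(r,\mathfrak{f})\leq T(r,M_2[\mathfrak{f}])+S(r,\mathfrak{f}).$$
If $M_2[\mathfrak{f}]$ were identically constant, then $T(r,M_2[\mathfrak{f}])=O(1)=S(r,\mathfrak{f})$, so $q_0\,T(r,\mathfrak{f})=S(r,\mathfrak{f})$, and since $q_0\geq 2$ this contradicts the transcendence of $\mathfrak{f}$. Hence $M_2[\mathfrak{f}]$ is not identically constant. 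The only step needing any thought is the estimate of $N\!\left(r,\tfrac{\mathfrak{f}'(z+c)}{\mathfrak{f}(z)}\right)$, where the hypothesis that $\mathfrak{f}$ is \emph{entire} enters; the remainder is a routine transcription of the argument for Lemma~\ref{3.6}.
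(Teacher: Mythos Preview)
Your proof is correct and follows essentially the same approach as the paper, which simply states that the proof is similar to that of Lemma~\ref{3.6}; you have faithfully carried out that transcription, replacing Lemma~\ref{3.1} by Lemma~\ref{3.3} for the shifted derivative and using the entirety of $\mathfrak{f}$ to control the pole term. Your final bound $q_0\,T(r,\mathfrak{f})\le T(r,M_2[\mathfrak{f}])+S(r,\mathfrak{f})$ is in fact slightly cleaner than the analogue obtained in the paper's proof of Lemma~\ref{3.6}, but the underlying idea is identical.
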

\begin{proof}
The proof is similar to the proof of Lemma \ref{3.6}.
\end{proof}
\begin{lem}\label{3.9}
Let $\mathfrak{f}$ be a transcendental entire function with hyper-order less than one and $ \alpha(z)$ $(\not\equiv 0,\infty)$ be a small function of $\mathfrak{f}$. Let $ M_1[\mathfrak{f}]= \alpha(z) \mathfrak{f}(z)^{q_0}(\mathfrak{f}(z+c))^{q_1}$, where $c$ is a non-zero complex number and $q_0\geq2,$ $q_1\geq 1$ are non-negative integers. If $a(z)$ $(\not\equiv 0,\infty)$ is an another small function of $\mathfrak{f}$, then
 $$\mu T(r,\mathfrak{f}) \leq  N(r,0;\mathfrak{f}^{\mu}) + \overline{N}(r,0;M_1[\mathfrak{f}]) + \overline{N}(r,a;M_1[\mathfrak{f}]) - N(r,0;M_1[\mathfrak{f}]) + S(r,\mathfrak{f}),$$
 where $\mu= q_0+q_1$.
\end{lem}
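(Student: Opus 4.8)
The plan is to reduce the claimed estimate to the Second Main Theorem applied to the monomial $F:=M_1[\mathfrak{f}]$ itself, where we abbreviate $\mu=q_0+q_1$. Before doing so I would record two preliminary observations. First, since $\mathfrak{f}$ is entire, the only poles of $F=\alpha(z)\mathfrak{f}(z)^{q_0}(\mathfrak{f}(z+c))^{q_1}$ come from $\alpha$, so $\overline{N}(r,\infty;F)\le\overline{N}(r,\infty;\alpha)=S(r,\mathfrak{f})$. Second, combining Lemma \ref{3.4} (which gives $T(r,F)\le\mu\,T(r,\mathfrak{f})+S(r,\mathfrak{f})$) with the chain of inequalities appearing in the proof of Lemma \ref{3.6} (which gives $\mu\,T(r,\mathfrak{f})\le(q_1+1)T(r,F)+S(r,\mathfrak{f})$), one sees that $T(r,F)$ and $T(r,\mathfrak{f})$ are comparable up to $S(r,\mathfrak{f})$; hence $S(r,F)=S(r,\mathfrak{f})$ and, in particular, $a=a(z)$ is a small function with respect to $F$ as well.

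Next I would run the standard First Fundamental Theorem computation. By the First Fundamental Theorem,
$$\mu\,T(r,\mathfrak{f})=T\!\left(r,\mathfrak{f}^{\mu}\right)+O(1)=T\!\left(r,\frac{1}{\mathfrak{f}^{\mu}}\right)+O(1)=m\!\left(r,\frac{1}{\mathfrak{f}^{\mu}}\right)+N(r,0;\mathfrak{f}^{\mu})+O(1).$$
Using the elementary identity
$$\frac{1}{\mathfrak{f}(z)^{\mu}}=\frac{\alpha(z)}{F}\left(\frac{\mathfrak{f}(z+c)}{\mathfrak{f}(z)}\right)^{q_1},$$
together with Lemma \ref{3.1} and the smallness of $\alpha$, we obtain $m(r,1/\mathfrak{f}^{\mu})\le m(r,1/F)+S(r,\mathfrak{f})$. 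Applying the First Fundamental Theorem once more to $1/F$ gives $m(r,1/F)=T(r,F)-N(r,0;F)+O(1)$, and therefore
$$\mu\,T(r,\mathfrak{f})\le N(r,0;\mathfrak{f}^{\mu})+T(r,F)-N(r,0;F)+S(r,\mathfrak{f}).$$

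Finally I would bound $T(r,F)$ by the Second Main Theorem for $F$ with the three targets $0$, $a(z)$ and $\infty$ (the moving target $a$ being admissible by the first paragraph), namely
$$T(r,F)\le\overline{N}(r,0;F)+\overline{N}(r,a;F)+\overline{N}(r,\infty;F)+S(r,F)=\overline{N}(r,0;F)+\overline{N}(r,a;F)+S(r,\mathfrak{f}),$$
where in the last step we used $\overline{N}(r,\infty;F)=S(r,\mathfrak{f})$ and $S(r,F)=S(r,\mathfrak{f})$. Substituting this into the previous display yields precisely
$$\mu\,T(r,\mathfrak{f})\le N(r,0;\mathfrak{f}^{\mu})+\overline{N}(r,0;M_1[\mathfrak{f}])+\overline{N}(r,a;M_1[\mathfrak{f}])-N(r,0;M_1[\mathfrak{f}])+S(r,\mathfrak{f}),$$
as required. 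The only point requiring care is the bookkeeping in the first paragraph: one must be sure that $T(r,F)\asymp T(r,\mathfrak{f})$ up to $S(r,\mathfrak{f})$, so that the error terms generated when working with $F$ are legitimately $S(r,\mathfrak{f})$ and so that $a$ stays small relative to $F$; the remainder is a routine chain of First and Second Main Theorem estimates combined with Lemma \ref{3.1}.
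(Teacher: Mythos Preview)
Your argument is correct and follows essentially the same route as the paper: bound $m(r,0;\mathfrak{f}^{\mu})$ by $m(r,0;M_1[\mathfrak{f}])$ via Lemma~\ref{3.1}, convert to $T(r,M_1[\mathfrak{f}])-N(r,0;M_1[\mathfrak{f}])$, and then apply the Second Main Theorem to $M_1[\mathfrak{f}]$ for $0,a,\infty$. You are in fact more careful than the paper in two places it glosses over: justifying $\overline N(r,\infty;M_1[\mathfrak{f}])=S(r,\mathfrak{f})$ and checking (via Lemmas~\ref{3.4} and~\ref{3.6}) that $S(r,M_1[\mathfrak{f}])=S(r,\mathfrak{f})$ so that $a$ remains small relative to $M_1[\mathfrak{f}]$.
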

\begin{proof}
Now, with help of Lemma \ref{3.1}, we have
\begin{eqnarray}\label{eq2d}
\nonumber\mu T(r,\mathfrak{f})= T\left(r, \frac{1}{\mathfrak{f}^{\mu}}\right)+ O(1) & = & N(r,0; \mathfrak{f}^{\mu})+ m(r,0; \mathfrak{f}^{\mu})+ S(r,\mathfrak{f})\\
\nonumber& \leq  & N(r,0; \mathfrak{f}^{\mu})+ m(r,0; M_1[\mathfrak{f}])+ S(r,\mathfrak{f})\\
& \leq  & N(r,0; \mathfrak{f}^{\mu})+T(r, M_1[\mathfrak{f}]) - N(r,0; M_1[\mathfrak{f}])+ S(r,\mathfrak{f})
\end{eqnarray}
Now, using the Nevanlinna's 2nd Fundamental theorem for the function $M_1[\mathfrak{f}]$, we have
\begin{equation}\label{eq2}
T(r, M_1[\mathfrak{f}]) \leq \overline{N}(r,\infty; M_1[\mathfrak{f}])+ \overline{N}(r,0; M_1[\mathfrak{f}]) + \overline{N}(r,a; M_1[\mathfrak{f}]) + S(r,M_1[\mathfrak{f}])
\end{equation}
Using (\ref{eq2d}) and (\ref{eq2}), we have
$$\mu T(r,\mathfrak{f}) \leq  N(r,0;\mathfrak{f}^{\mu}) + \overline{N}(r,0;M[\mathfrak{f}]) + \overline{N}(r,a;M[\mathfrak{f}]) - N(r,0;M[\mathfrak{f}]) + S(r,\mathfrak{f}).$$
Thus, the lemma is proved.
\end{proof}
\begin{lem}\label{3.10}
Let $\mathfrak{f}$ be a transcendental entire function with hyper-order less than one and $ \alpha(z)$ $(\not\equiv 0,\infty)$ be a small function of $\mathfrak{f}$. Let   $ M_2[\mathfrak{f}]= \alpha(z) \mathfrak{f}(z)^{q_0}(\mathfrak{f}'(z+c))^{q_1}$, where $c$ is a non-zero complex number and $q_0\geq2,$ $q_1\geq 1$  are non-negative integers. If $a(z)$ $(\not\equiv 0,\infty)$ is a small function of $\mathfrak{f}$, then
 $$\mu T(r,\mathfrak{f}) \leq  N(r,0;\mathfrak{f}^{\mu}) + \overline{N}(r,0;M_2[\mathfrak{f}]) + \overline{N}(r,a;M_2[\mathfrak{f}]) - N(r,0;M_2[\mathfrak{f}]) + S(r,\mathfrak{f}),$$
 where $\mu= q_0+q_1$.
\end{lem}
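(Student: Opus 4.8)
The plan is to follow the argument of Lemma \ref{3.9} almost verbatim, the only structural change being that the difference factor $(\mathfrak{f}(z+c))^{q_1}$ is replaced by the differential-difference factor $(\mathfrak{f}'(z+c))^{q_1}$; accordingly, the estimate on the proximity function supplied there by Lemma \ref{3.1} will now be supplied by Lemma \ref{3.3}.

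First I would record the preliminary facts. Since $\mathfrak{f}$ is entire of hyper-order less than one, Lemma \ref{3.2} gives $T(r,\mathfrak{f}'(z+c)) = T(r,\mathfrak{f}') + S(r,\mathfrak{f})$, while $T(r,\mathfrak{f}') = m(r,\mathfrak{f}') \leq T(r,\mathfrak{f}) + S(r,\mathfrak{f})$; hence $T(r,M_2[\mathfrak{f}]) \leq (q_0+q_1)T(r,\mathfrak{f}) + S(r,\mathfrak{f}) = \mu\, T(r,\mathfrak{f}) + S(r,\mathfrak{f})$. Combined with Lemma \ref{3.7} (so that $M_2[\mathfrak{f}]$ is non-constant), the same quotient computation as in the proof of Lemma \ref{3.6} yields $S(r,M_2[\mathfrak{f}]) = S(r,\mathfrak{f})$, which legitimises the error terms below.

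Next, by the First Fundamental Theorem, $\mu\, T(r,\mathfrak{f}) = T(r,1/\mathfrak{f}^{\mu}) + O(1) = N(r,0;\mathfrak{f}^{\mu}) + m(r,0;\mathfrak{f}^{\mu}) + O(1)$. I would then exploit the identity
\[
\frac{1}{\mathfrak{f}(z)^{\mu}} = \alpha(z)\left(\frac{\mathfrak{f}'(z+c)}{\mathfrak{f}(z)}\right)^{q_1}\frac{1}{M_2[\mathfrak{f}]},
\]
which gives $m(r,0;\mathfrak{f}^{\mu}) \leq m(r,0;M_2[\mathfrak{f}]) + q_1\, m\!\left(r,\tfrac{\mathfrak{f}'(z+c)}{\mathfrak{f}(z)}\right) + m(r,\alpha) = m(r,0;M_2[\mathfrak{f}]) + S(r,\mathfrak{f})$, the proximity term being $S(r,\mathfrak{f})$ by Lemma \ref{3.3} (with $k=1$, $d=0$) and $m(r,\alpha)=S(r,\mathfrak{f})$ since $\alpha$ is small. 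Writing $m(r,0;M_2[\mathfrak{f}]) = T(r,M_2[\mathfrak{f}]) - N(r,0;M_2[\mathfrak{f}]) + O(1)$ then produces
\[
\mu\, T(r,\mathfrak{f}) \leq N(r,0;\mathfrak{f}^{\mu}) + T(r,M_2[\mathfrak{f}]) - N(r,0;M_2[\mathfrak{f}]) + S(r,\mathfrak{f}).
\]
Finally I would apply Nevanlinna's Second Fundamental Theorem to $M_2[\mathfrak{f}]$ for the three small targets $0,\ a,\ \infty$:
\[
T(r,M_2[\mathfrak{f}]) \leq \overline{N}(r,\infty;M_2[\mathfrak{f}]) + \overline{N}(r,0;M_2[\mathfrak{f}]) + \overline{N}(r,a;M_2[\mathfrak{f}]) + S(r,M_2[\mathfrak{f}]),
\]
and observe that, $\mathfrak{f}$ being entire, the poles of $M_2[\mathfrak{f}]$ arise only from $\alpha$, so $\overline{N}(r,\infty;M_2[\mathfrak{f}]) = S(r,\mathfrak{f})$, while $S(r,M_2[\mathfrak{f}]) = S(r,\mathfrak{f})$ by the first step. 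Substituting this into the previous display gives exactly
\[
\mu\, T(r,\mathfrak{f}) \leq N(r,0;\mathfrak{f}^{\mu}) + \overline{N}(r,0;M_2[\mathfrak{f}]) + \overline{N}(r,a;M_2[\mathfrak{f}]) - N(r,0;M_2[\mathfrak{f}]) + S(r,\mathfrak{f}),
\]
as claimed. The only genuinely new ingredient compared with Lemma \ref{3.9} is the logarithmic-derivative-type estimate $m(r,\mathfrak{f}'(z+c)/\mathfrak{f}(z)) = S(r,\mathfrak{f})$, i.e.\ Lemma \ref{3.3}; once this is in hand I expect no real obstacle, the remaining steps being routine bookkeeping with the First and Second Fundamental Theorems and the small-function hypotheses on $\alpha$ and $a$.
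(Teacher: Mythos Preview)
Your proposal is correct and matches the paper's own approach: the paper's proof of Lemma~\ref{3.10} simply reads ``The proof is similar to the proof of the Lemma~\ref{3.9},'' and what you have written is precisely that argument carried out in detail, with Lemma~\ref{3.3} supplying the proximity estimate $m\!\left(r,\mathfrak{f}'(z+c)/\mathfrak{f}(z)\right)=S(r,\mathfrak{f})$ in place of Lemma~\ref{3.1}. If anything, your write-up is more careful than the paper's sketch, since you explicitly justify $S(r,M_2[\mathfrak{f}])=S(r,\mathfrak{f})$ and $\overline{N}(r,\infty;M_2[\mathfrak{f}])=S(r,\mathfrak{f})$.
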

\begin{proof}
The proof is similar to the proof of the Lemma \ref{3.9}.
\end{proof}
\section{Proofs of the Main Theorems}
\begin{proof}[\textbf{Proof of the Theorem \ref{th1}}]
Let $ M_1[\mathfrak{f}]= \alpha(z) \mathfrak{f}(z)^{q_0}(\mathfrak{f}(z+c))^{q_1}$ and $\mu= q_0+q_1$. \medbreak
\textbf{Case 1:} Let $z_0$ be a zero of $\mathfrak{f}(z)$ of multiplicity $q$ and $\mathfrak{f}(z+c)$ has no zero at $z_0$.\medbreak
Thus  $z_0$ is a zero of $\mathfrak{f}(z)^{q_0}(\mathfrak{f}(z+c))^{q_1}$ of order at least $ qq_0.$  Now,
$$ q\mu +1 -qq_0 = q(q_0+q_1) +1 -qq_0  \leq  qq_1 +1$$
Therefore,
\begin{eqnarray}\label{p1}
\nonumber && N(r,0;\mathfrak{f}^{\mu}) + \overline{N}(r,0;M_1[\mathfrak{f}]) - N(r,0;M_1[\mathfrak{f}]) \\
 &\leq & q_1 N(r,0;\mathfrak{f}) + \overline{N}(r,0;\mathfrak{f})
\end{eqnarray}
Using Lemma (\ref{3.9}) and (\ref{p1}), we get
\begin{eqnarray*}
\mu T(r, \mathfrak{f}) & \leq &  \overline{N}(r,a; M_1[\mathfrak{f}])+ (q_1+1) T(r,\mathfrak{f})+ S(r,\mathfrak{f}).
\end{eqnarray*}
Therefore, $$T(r, \mathfrak{f})  \leq  \frac{1}{(q_0-1)}\overline{N}(r,a;M_1[\mathfrak{f}])+ S(r,\mathfrak{f}).$$
\textbf{Case 2: } Let $z_0$ be a zero of $\mathfrak{f}(z)$ of multiplicity $q$ and $\mathfrak{f}(z+c)$ has also zero at $z_0$ of multiplicity $k$.\medbreak
If $q=k$, then $z_0$ is a zero of $\mathfrak{f}(z)^{q_0}(\mathfrak{f}(z+c))^{q_1}$ of order at least $ qq_0 + kq_1.$ Now,
$$ q\mu +1 -(qq_0+ kq_1)= q(q_0+q_1) +1 -qq_0-kq_1\leq 1$$
Therefore,
\begin{eqnarray}\label{s1}
 N(r,0;\mathfrak{f}^{\mu}) + \overline{N}(r,0;M_1[\mathfrak{f}]) - N(r,0;M_1[\mathfrak{f}])\leq  \overline{N}(r,0;\mathfrak{f})
\end{eqnarray}
Using (\ref{s1}) and Lemma (\ref{3.9}), we get
\begin{eqnarray*}
\mu T(r, \mathfrak{f}) & \leq &  \overline{N}(r,a;M[\mathfrak{f}])+ T(r, \mathfrak{f})+ S(r,\mathfrak{f})
\end{eqnarray*}
Since $ q_1 \geq 1$, then
 $$T(r, \mathfrak{f})  \leq  \frac{1}{(q_0-1)}\overline{N}(r,a;M[\mathfrak{f}])+ S(r,\mathfrak{f}).$$
If $q\not=k$, then  $z_0$ is a zero of $\mathfrak{f}(z)^{q_0}(\mathfrak{f}(z+c))^{q_1}$ of order at least $qq_0 + kq_1.$ Now,
$$ q\mu +1 -(qq_0+ kq_1) = qq_1 +1 -kq_1 \leq  qq_1 +1 $$
Therefore,
\begin{eqnarray}\label{e1}
\nonumber&& N(r,0;\mathfrak{f}^{\mu}) + \overline{N}(r,0;M[\mathfrak{f}]) - N(r,0;M[\mathfrak{f}]) \\
 &\leq &  q_1 N(r,0;\mathfrak{f}) + \overline{N}(r,0;\mathfrak{f})
\end{eqnarray}
Using (\ref{e1}) and Lemma (\ref{3.9}), we get
\begin{eqnarray*}
\mu T(r, \mathfrak{f}) & \leq &  \overline{N}(r,a;M[\mathfrak{f}])+ q_1 T(r,\mathfrak{f}) +T(r, \mathfrak{f})+ S(r,\mathfrak{f}).
\end{eqnarray*}
Therefore, $$T(r, \mathfrak{f})  \leq  \frac{1}{(q_0-1)}\overline{N}(r,a;M[\mathfrak{f}])+ S(r,\mathfrak{f}).$$

\textbf{Case 3:} Let $\mathfrak{f}(z)$ has no zero at $z_0$, but  $\mathfrak{f}(z+c)$ has zero at $z_0$ of multiplicity $q$.\medbreak
Then  $z_0$ is a zero of $\mathfrak{f}(z)^{q_0}(\mathfrak{f}(z+c))^{q_1}$ of order at least $ qq_1.$ Now,
$$ 1 -qq_1  \leq  1  \leq  qq_1 +1$$
Therefore,
\begin{eqnarray}
\label{t1}\nonumber && N(r,0;\mathfrak{f}^{\mu}) + \overline{N}(r,0;M[\mathfrak{f}]) - N(r,0;M[\mathfrak{f}]) \\
 &\leq & q_1 N(r,0;\mathfrak{f}(z+c)) + \overline{N}(r,0;\mathfrak{f}(z+c))
\end{eqnarray}
Using (\ref{t1}), Lemma (\ref{3.9}) and Lemma (\ref{3.2}), we get
\begin{eqnarray*}
\mu T(r, \mathfrak{f}) & \leq &  \overline{N}(r,a;M[\mathfrak{f}])+ q_1 T(r,0;\mathfrak{f}(z+c))+ T(r,0;\mathfrak{f}(z+c))+ S(r,\mathfrak{f})\\
& \leq &  \overline{N}(r,a;M[\mathfrak{f}])+ q_1 T(r,\mathfrak{f})+ T(r,\mathfrak{f})+ S(r,\mathfrak{f})
\end{eqnarray*}
Therefore, $$T(r, \mathfrak{f})  \leq  \frac{1}{(q_0-1)}\overline{N}(r,a;M[\mathfrak{f}])+ S(r,\mathfrak{f}).$$
This completes the proof.
\end{proof}
\begin{proof}[\textbf{Proof of the Theorem \ref{th2}}]
Let $ M_2[\mathfrak{f}]= \alpha(z) \mathfrak{f}(z)^{q_0}(\mathfrak{f}'(z+c))^{q_1}$ and $\mu= q_0+q_1$. \medbreak
\textbf{Case 1}:  Let $z_0$ be a zero of $\mathfrak{f}(z)$ of multiplicity $q$, and $\mathfrak{f}(z+c)$ has also zero at $z_0$ of multiplicity $k$.\medbreak
If  $q\neq k$, then $z_0$ is a zero of $\mathfrak{f}(z)^{q_0}(\mathfrak{f}'(z+c))^{q_1}$ of order at least $ qq_0 + (k-1)q_1.$ Now,\\ $$ q\mu +1 -(qq_0+ (k-1)q_1) \leq  qq_1+1+q_1 $$
Therefore,
\begin{eqnarray}\label{01}
\nonumber&& N(r,0;\mathfrak{f}^{\mu}) + \overline{N}(r,0;M[\mathfrak{f}]) - N(r,0;M[\mathfrak{f}]) \\
 &\leq &  q_1N(r,0;\mathfrak{f})+ \overline{N}(r,0;\mathfrak{f})+ q_1\overline{N}(r,0;\mathfrak{f})
\end{eqnarray}
Using Lemma (\ref{3.10}) and (\ref{01}), we get
\begin{eqnarray*}
\mu T(r, \mathfrak{f}) & \leq &  \overline{N}(r,a;M[\mathfrak{f}])+  (2q_1+1)T(r,\mathfrak{f})+ S(r,\mathfrak{f})
\end{eqnarray*}
Since  $q_0> q_1+1$, then
 $$T(r, \mathfrak{f})  \leq  \frac{1}{(q_0-q_1-1)}\overline{N}(r,a;M[\mathfrak{f}])+ S(r,\mathfrak{f}).$$
If $q=k$, then  $z_0$ is a zero of $\mathfrak{f}(z)^{q_0}(\mathfrak{f}'(z+c))^{q_1}$ of order at least $ qq_0 + (k-1)q_1.$ Now,
$$ q\mu +1 -(qq_0+ (k-1)q_1)=q(q_0+q_1) +1 -qq_0-kq_1+q_1 \leq  q_1 +1$$
Therefore,
\begin{eqnarray} \label{02}
N(r,0;\mathfrak{f}^{\mu}) + \overline{N}(r,0;M[\mathfrak{f}]) - N(r,0;M[\mathfrak{f}])\leq   q_1 \overline{N}(r,0;\mathfrak{f}) + \overline{N}(r,0;\mathfrak{f})
\end{eqnarray}
Using (\ref{02}) and Lemma (\ref{3.10}), we get
\begin{eqnarray*}
\mu T(r, \mathfrak{f}) & \leq &  \overline{N}(r,a;M[\mathfrak{f}])+ (q_1+1) T(r,\mathfrak{f}) + S(r,\mathfrak{f})
\end{eqnarray*}
Since $ q_1 \geq 1$ and $q_0> q_1+1$, then
 $$T(r, \mathfrak{f})  \leq  \frac{1}{(q_0-q_1-1)}\overline{N}(r,a;M[\mathfrak{f}])+ S(r,\mathfrak{f}).$$ \medbreak
\textbf{Case 2}:  Let $z_0$ be a zero of $\mathfrak{f}(z)$ of multiplicity $q$, but  $\mathfrak{f}(z+c)$ has no zero at $z_0$.\medbreak
If  $\mathfrak{f}'(z+c)$ has no zero at $z_0$, then  $z_0$ is a zero of $\mathfrak{f}(z)^{q_0}(\mathfrak{f}'(z+c))^{q_1}$ of order at least $ qq_0.$ Now,
$$ q\mu +1 -(qq_0)  \leq  qq_1 +1$$
Therefore,
\begin{eqnarray} \label{03}
N(r,0;\mathfrak{f}^{\mu}) + \overline{N}(r,0;M[\mathfrak{f}]) - N(r,0;M[\mathfrak{f}]) \leq  q_1 N(r,0;\mathfrak{f}) + \overline{N}(r,0;\mathfrak{f})
\end{eqnarray}
Using (\ref{03}) and Lemma (\ref{3.10}), we get
\begin{eqnarray*}
\mu T(r, \mathfrak{f}) & \leq &  \overline{N}(r,a;M[\mathfrak{f}])+ (q_1+1) T(r,\mathfrak{f})+ S(r,\mathfrak{f})
\end{eqnarray*}
Since $ q_1 \geq 1$ and $q_0> q_1+1$ then
 $$T(r, \mathfrak{f})  \leq  \frac{1}{(q_0-q_1-1)}\overline{N}(r,a;M[\mathfrak{f}])+ S(r,\mathfrak{f}).$$
 If $\mathfrak{f}'(z+c)$ has a zero at $z_0$ of order $r$, then \medbreak
\textbf{subcase 2.1}:
Let $r=q$, then  $z_0$ is a zero of $\mathfrak{f}(z)^{q_0}(\mathfrak{f}'(z+c))^{q_1}$ of order at least $ qq_0 + rq_1.$ Now,\\ $$ q\mu +1 -(qq_0+ rq_1) \leq  1+q_1 $$
Therefore,
\begin{eqnarray}\label{04}
\nonumber&& N(r,0;\mathfrak{f}^{\mu}) + \overline{N}(r,0;M[\mathfrak{f}]) - N(r,0;M[\mathfrak{f}]) \\
 &\leq &   \overline{N}(r,0;\mathfrak{f})+ q_1\overline{N}(r,0;\mathfrak{f})
\end{eqnarray}
Using (\ref{04}) and lemma (\ref{3.10}), we get 
\begin{eqnarray*}
\mu T(r, \mathfrak{f}) & \leq &  \overline{N}(r,a;M[\mathfrak{f}])+ (q_1+1)T(r,\mathfrak{f})+ S(r,\mathfrak{f})
\end{eqnarray*}
Since $ q_1 \geq 1$ and $q_0> q_1+1$, then
 $$T(r, \mathfrak{f})  \leq  \frac{1}{(q_0-q_1-1)}\overline{N}(r,a;M[\mathfrak{f}])+ S(r,\mathfrak{f}).$$ \medbreak
\textbf{subcase 2.2}: Let $r \neq q$, then  $z_0$ is a zero of $\mathfrak{f}(z)^{q_0}(\mathfrak{f}'(z+c))^{q_1}$ of order at least $ qq_0 + rq_1.$ Now,\\ $$ q\mu +1 -(qq_0+ rq_1) \leq  1+ qq_1 $$
Therefore,
\begin{eqnarray}\label{05}
\nonumber&& N(r,0;\mathfrak{f}^{\mu}) + \overline{N}(r,0;M[\mathfrak{f}]) - N(r,0;M[\mathfrak{f}]) \\
 &\leq &   \overline{N}(r,0;\mathfrak{f})+ q_1 N(r,0;\mathfrak{f})
\end{eqnarray}
Using (\ref{05}) and lemma (\ref{3.10}), we get
\begin{eqnarray*}
\mu T(r, \mathfrak{f}) & \leq &  \overline{N}(r,a;M[\mathfrak{f}])+ (q_1+1)T(r,\mathfrak{f})+ S(r,\mathfrak{f})
\end{eqnarray*}
Since $ q_1 \geq 1$ and $q_0> q_1+1$, then
 $$T(r, \mathfrak{f})  \leq  \frac{1}{(q_0-q_1-1)}\overline{N}(r,a;M[\mathfrak{f}])+ S(r,\mathfrak{f}).$$ \medbreak
\textbf{Case 3:}  Let $\mathfrak{f}(z)$ has no zero at $z_0$, but $\mathfrak{f}(z+c)$ has  zero at $z_0$ of multiplicity $q$.\medbreak
Then  $z_0$ is a zero of $\mathfrak{f}(z)^{q_0}(\mathfrak{f}'(z+c))^{q_1}$ of order at least $ (q-1)q_1$. Now,
$$1 -((q-1)q_1) =  1 -qq_1+q_1	 \leq  q_1 +1$$
Therefore,
\begin{eqnarray} \label{06}
\nonumber && N(r,0;\mathfrak{f}^{\mu}) + \overline{N}(r,0;M[\mathfrak{f}]) - N(r,0;M[\mathfrak{f}]) \\
 &\leq & q_1 \overline{N}(r,0;\mathfrak{f}(z+c)) + \overline{N}(r,0;\mathfrak{f}(z+c))
\end{eqnarray}
Using (\ref{04}), Lemma (\ref{3.2}) and Lemma (\ref{3.10}), we have
\begin{eqnarray*}
\mu T(r, \mathfrak{f}) & \leq &  \overline{N}(r,a;M[\mathfrak{f}])+ q_1 T(r,0;\mathfrak{f}(z+c))+ T(r,0;\mathfrak{f}(z+c))+ S(r,\mathfrak{f})\\
 & \leq &  \overline{N}(r,a;M[\mathfrak{f}])+(q_1+1)T(r, \mathfrak{f})+ S(r,\mathfrak{f}).
\end{eqnarray*}
Since $ q_1 \geq 1$ and $q_0> q_1+1$, then
 $$T(r, \mathfrak{f})  \leq  \frac{1}{(q_0-q_1-1)}\overline{N}(r,a;M[\mathfrak{f}])+ S(r,\mathfrak{f}).$$
This completes the proof of the theorem.
\end{proof}
\section{Acknowledgement}
The Authors are thankful to the referees for their valuable suggestions and comments which considerably improved the presentation of this paper. The authors are thankful to Dr. Bikash Chakraborty for his comments during the preparation of this manuscript.

\end{document}